\newtheorem{thm}{Theorem}
\newtheorem{lem}[thm]{Lemma}
\newtheorem{prop}[thm]{Proposition}
\newtheorem{cor}[thm]{Corollary}
\theoremstyle{definition}
\newtheorem{defn}[thm]{Definition}
\theoremstyle{remark}
\newtheorem{rem}[thm]{Remark}
\begin{document}

\noindent{\large
{\bf Some cohomologically rigid solvable Leibniz algebras}}\footnote{
The work was partially supported by Ministerio de Econom\'{i}a y Competitividad (Spain), grant MTM2016-79661-P (European FEDER support included, UE);
RFBR 19-51-04002; 
FAPESP 18/12196-0, 18/12197-7, 18/15712-0.
}

\

   {\bf
            Luisa M. Camacho$^{a}$,
            Ivan  Kaygorodov$^{b}$,
            Bakhrom Omirov$^{c}$ \&
            Gulkhayo Solijanova$^{d}$
 }

{\tiny

\

$^{a}$ Dpto. Matem\'{a}tica Aplicada I. Universidad de Sevilla. Avda. Reina Mercedes, s/n. 41012 Sevilla, Spain.

$^{b}$ CMCC, Universidade Federal do ABC, Santo Andr\'{e}, Brazil.

$^{c}$ Institute of Mathematics of Uzbekistan Academy of Sciences, 81, Mirzo Ulugbek street, Tashkent, 100041, Uzbekistan.

$^{d}$ National University of Uzbekistan, 4, University street, Tashkent, 100174, Uzbekistan.

\

\

\smallskip

   E-mail addresses:

\smallskip

Luisa M. Camacho (lcamacho@us.es)  \smallskip

Ivan   Kaygorodov (kaygorodov.ivan@gmail.com) \smallskip

Bakhrom Omirov (omirovb@mail.ru)  \smallskip

Gulkhayo Solijanova (gulhayo.solijonova@mail.ru)  \smallskip

}

\

{\bf Abstract.}  In this paper we describe solvable Leibniz algebras whose quotient algebra by one-dimensional ideal is a Lie algebra with rank equal to the length of the characteristic sequence of its nilpotent radical. We prove that such Leibniz algebra is unique and centerless. Also it is proved that the first and the second cohomology groups of the algebra with coefficients in itself is trivial.

\medskip

\medskip \textbf{AMS Subject Classifications (2010):
17A32, 17A60, 17B10, 17B20.}

\medskip

\textbf{Key words:} Lie algebra, Leibniz algebra, nilpotent radical, characteristic sequence, solvable algebra, derivation, $2$-cocycle, rigid algebra.

\section{Introduction}

Leibniz algebras are characterized
as algebras whose the right multiplication operators are derivations, it is a generalization of Lie algebra, while for a Leibniz algebra to be a Lie algebra it suffices to add the condition that the operators of right and left multiplications alternate. Leibniz algebras have been introduced by Loday in \cite{Lod} as algebras satisfying the (right) Leibniz identity:
$$[x,[y, z]]=[[x, y], z] - [[x, z], y].$$


During the last decades the theory of Leibniz algebras has been actively studied. Some (co)homology and deformation  properties; results on various types of decompositions; structure of solvable and nilpotent Leibniz algebras; classifications of some classes of graded nilpotent Leibniz algebras were obtained in  numerous papers devoted to Leibniz algebras, see, for example,  \cite{AlbAyupov1, Bal, Lod-Pir, Barnes, Gorbat, hnn, geo, deg, hei, qua, ed1, ed2
} and
 reference therein.

In fact, many results on Lie algebras have been extended to the Leibniz algebra case. For instance, an analogue of Levi's theorem for the case of Leibniz algebras asserts that Leibniz algebra is decomposed into a semidirect sum of its solvable radical and a semisimple Lie subalgebra \cite{Barnes}. Therefore, the description of finite-dimensional Leibniz algebras shifts to the study of solvable Leibniz algebras. Since the method of
the reconstruction of solvable Lie algebras from their nilpotent radicals (see \cite{Mub}) was extended to the Leibniz algebras \cite{Nulfilrad}, the main problem of the description of finite-dimensional Leibniz algebras consists of the study of nilpotent Leibniz algebras. Numerous works are devoted to the description of solvable Lie and Leibniz algebras with a given nilpotent radical (see   \cite{Lindsey, BoPaPo, Lisa, Cam, Iqbol, Snobl} and reference therein).

It is known that any Leibniz algebra law can be considered as a point of an affine algebraic variety defined by
the polynomial equations coming from the Leibniz identity for a given basis. This way provides a description of the difficulties in classification problems referring to the classes of nilpotent and solvable Leibniz algebras. The orbits under the base change action of the general linear group correspond to the isomorphism classes of Leibniz algebras therefore, the classification problems (up to isomorphism) can be reduced to the classification of these orbits. An affine algebraic variety is a union of a finite number of irreducible components and the Zariski open orbits provide interesting classes of Leibniz algebras to be classified. The Leibniz algebras of this class are called rigid.

In the study of nilpotent Lie algebras a very useful tool is characteristic sequence, which a priori gives the multiplication on one basis element. Recently, in the paper \cite{Ancochea} it was considered a finite-dimensional solvable Lie algebra $\mathfrak{r}_c$ whose nilpotent radical $\mathfrak{n}_c$ has the simplest structure with a given characteristic sequence $c=(n_1, n_2, \ldots, n_k, 1)$. Using Hochschild -- Serre factorization theorem the authors established that for the algebra $\mathfrak{r}_c$ low order cohomology groups with coefficient in itself are trivial.

In this paper we consider the family of nilpotent Leibniz algebras such that its corresponding Lie algebra is $\mathfrak{n}_c.$ Further, solvable Leibniz algebras with such nilpotent radicals and $(k+1)$-dimensional complementary subspaces to the nilpotent radicals are described. Namely, we prove that such solvable Leibniz algebra is unique and centerless. For this Leibniz algebra the triviality of the first and the second cohomology groups with coefficient in itself is established as well.

\section{Preliminaries}

Throughout the paper, all vector spaces and algebras considered are finite-dimensional over the field of complex numbers $\mathbb{C}$. Moreover, in the table of multiplication of an algebra the omitted products are assumed to be zero.

In this section we give necessary definitions and results on solvable Leibniz algebras and its construction with a given nilpotent radical.

\begin{defn} An algebra $( L,[\cdot,\cdot])$  is called a
Leibniz algebra if it satisfies the property
\begin{center}
\([x,[y,z]]=[[x,y],z] - [[x,z],y]\) for all \(x,y \in
L,\)
\end{center}
which is called Leibniz identity.
\end{defn}

The Leibniz identity is a generalization of the Jacobi identity since under the condition of anti-symmetricity of the product "[$\cdot\, ,\,\cdot$]" this identity changes to the Jacobi identity. In fact, Leibniz algebras is characterized by the property that any right multiplication operator is a derivation.

For a Leibniz algebra $L$, a subspace generated by squares of its elements $\mathcal{I}=\text{span}\left\{[x,x]:  x\in L\right\}$ is a two-sided ideal, and the quotient $\mathcal{G}_L=L/\mathcal{I}$ is a Lie algebra called corresponding Lie algebra (sometimes also called by liezation) of $L.$

For a given Leibniz algebra $L$ we can define the following two-sided ideals
$${\rm Ann}_r(L) =\{x \in L \mid [y,x] = 0,\ \text{for \ all}\ y \in L \},$$
$${\rm Center}(L) =\{x \in L \mid [x,y]=[y,x] = 0,\ \text{for \ all}\ y \in L \}$$
called the {\it right annihilator} and the {\it center} of $L$, respectively.

Applying the Leibniz identity we obtain that for any two elements $x,y$ of an algebra the elements $[x,x], [x,y]+[y,x]$ in ${\rm Ann}_r(L)$.

The notion of a derivation for Leibniz algebras is defined in a usual way and the set of all derivations of $L$ (denoted by ${\mathfrak Der} L $) forms a Lie algebra with respect to the commutator. Moreover, the operator of right multiplication on an element $x\in L$ (further denoted by $\mathcal{R}_x$) is a derivation, which is called {\it inner derivation}.


\begin{defn} A Leibniz algebra $L$ is called \textit{complete} if ${\rm Center}(L)=0$ and all derivations of $L$ are
inner. \end{defn}

For a Leibniz algebra $L$ we define the {\it lower central} and the {\it derived series} as follows:
$$L^1=L, \ L^{k+1}=[L^k,L],  \ k \geq 1, \qquad L^{[1]}=L, \ L^{[s+1]}=[L^{[s]},L^{[s]}], \ s \geq 1,$$
respectively.


\begin{defn} A Leibniz algebra $L$ is called {\it nilpotent} (respectively, {\it solvable}), if there exists $n\in\mathbb N$ ($m\in\mathbb N$) such that $L^{n}=0$ (respectively, $L^{[m]}=0$).
\end{defn}

The maximal nilpotent ideal of a Leibniz algebra is said to be the {\it nilpotent radical} of the  algebra.

Further we shall need the following result from \cite{Ayupov}. It is an extension of the similar result for Lie algebras.
\begin{thm} \label{thmsolv} Let $L$ be a finite-dimensional solvable Leibniz algebra over a field of characteristic zero. Then $L$ is solvable if and only if $L^2$ is nilpotent algebra.
\end{thm}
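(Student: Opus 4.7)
The backward implication is immediate: if $L^2$ is nilpotent then it is solvable, and since $L/L^2$ is abelian (hence solvable) the extension $L$ itself is solvable.

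For the forward direction I would mimic the classical argument from Lie theory. The starting point is the representation $\mathcal{R}\colon L\to \mathrm{End}(L)$, $x\mapsto \mathcal{R}_x$. A direct application of the Leibniz identity to $[z,[x,y]]$ gives
$$\mathcal{R}_{[x,y]}(z)=[z,[x,y]]=[[z,x],y]-[[z,y],x]=(\mathcal{R}_y\mathcal{R}_x-\mathcal{R}_x\mathcal{R}_y)(z),$$
so $\mathcal{R}_{[x,y]}=-[\mathcal{R}_x,\mathcal{R}_y]$ and consequently the image $\mathcal{R}(L)$ is closed under the commutator bracket, i.e.\ it is a Lie subalgebra of $\mathrm{End}(L)$. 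Since $\mathcal{R}_i=0$ for every $i\in\mathrm{Ann}_r(L)$ and $\mathcal{I}\subseteq\mathrm{Ann}_r(L)$, the Lie algebra $\mathcal{R}(L)$ is (up to a sign convention) a quotient of the solvable Lie algebra $\mathcal{G}_L=L/\mathcal{I}$, hence itself solvable.

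Working over $\mathbb{C}$, Lie's theorem applied to the solvable Lie subalgebra $\mathcal{R}(L)\subseteq\mathrm{End}(L)$ supplies a basis of $L$ in which every operator $\mathcal{R}_x$ is upper triangular. The commutator identity above then forces $\mathcal{R}_y$ to be strictly upper triangular, and in particular nilpotent on $L$, for every $y\in L^2=[L,L]$. Because $L^2$ is a two-sided ideal it is $\mathcal{R}_y$-invariant, so each restriction $\mathcal{R}_y|_{L^2}$ is a nilpotent endomorphism of the Leibniz subalgebra $L^2$.

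Finally, an Engel-type theorem for Leibniz algebras—every finite-dimensional Leibniz algebra whose inner right multiplications are all nilpotent is itself nilpotent—applied to $L^2$ yields that $L^2$ is nilpotent. The principal obstacle I foresee is invoking the Leibniz versions of Lie's and Engel's theorems with the correct conventions for left versus right multiplication (since $\mathcal{R}$ is only an anti-homomorphism from $\mathcal{G}_L$ to $\mathrm{End}(L)$); once those are in place the reduction through $\mathcal{G}_L$ and the triangularization step are essentially routine translations of the Lie-algebra proof.
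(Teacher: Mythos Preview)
The paper does not prove this theorem; it is quoted verbatim from the reference \cite{Ayupov} as a preliminary fact, so there is no ``paper's own proof'' to compare against.

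Your argument is the standard one and is essentially how the result is established in that reference. The reduction is sound: the right-regular representation $x\mapsto\mathcal{R}_x$ kills $\mathcal{I}$, so its image is a solvable Lie algebra of endomorphisms; Lie's theorem triangularizes all $\mathcal{R}_x$ simultaneously, forcing $\mathcal{R}_y$ to be strictly upper triangular for $y\in L^2$; and the Leibniz Engel theorem (also proved in \cite{Ayupov}) then yields nilpotency of $L^2$. One small point: the statement is phrased for an arbitrary field of characteristic zero, while Lie's theorem needs algebraic closure. This is harmless---pass to $\overline{K}$, apply your argument to $L\otimes_K\overline{K}$, and observe that nilpotency of $(L\otimes\overline{K})^2=L^2\otimes\overline{K}$ descends to $L^2$---but you should say so explicitly. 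In the context of the present paper it is moot anyway, since the standing convention is that everything is over~$\mathbb{C}$.
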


An analogue of Mubarakzjanov's methods has been applied for solvable Leibniz algebras which shows the importance of the consideration of nilpotent Leibniz algebras and its nil-independent derivations \cite{Nulfilrad}.

\begin{defn} Let $d_1, d_2, \ldots, d_n$ be derivations of a Leibniz algebra $L$. The  derivations $d_1, d_2, \ldots, d_n$ are said to be nil-independent if  $\alpha_1 d_1 + \alpha_2 d_2 + \ldots + \alpha_n d_n$
is not nilpotent for any scalars $\alpha_1, \alpha_2, \ldots, \alpha_n \in \mathbb{C}$, which are not all zero.
%
\end{defn}

In the paper of \cite{Khal} it is proved the following theorem.

\begin{thm}\label{thmLie} Let $R = N \oplus Q$ be a solvable Lie algebra such that $\dim Q =
\dim N/N^2 = k$. Then $R$ admits a basis $\{e_1, e_2, \ldots, e_n, x_1, x_2, \ldots, x_k\}$ such
that the table of multiplication in $R$ has the following form:
$$\left\{\begin{array}{lll}
[e_i, e_j]=\displaystyle \sum_{t=k+1}^{n}\gamma_{i,j}^t e_t,& 1 \leq i, j \leq n,\\[1mm]
[e_i,x_i] = e_i, & 1 \leq i \leq k,\\[1mm]
[e_i,x_j] = \alpha_{i,j}e_i, & k + 1 \leq i \leq n, 1 \leq j \leq k,\\[1mm]
\end{array}\right.$$
where $\alpha_{i,j}$ is the number of entries of a generator basis element $e_j$ involved in forming non generator basis element $e_i$.
\end{thm}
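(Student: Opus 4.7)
The plan is to combine Mubarakzjanov's structure theorem for solvable Lie algebras with a careful choice of a common eigenbasis for the adjoint action of $Q$ on $N$.

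First, I would invoke the classical fact (Mubarakzjanov) that in a finite-dimensional solvable Lie algebra over $\mathbb{C}$ with nilradical $N$, the complementary subspace $Q$ can be chosen so that every $q \in Q$ acts on $N$ as a semisimple derivation and these derivations pairwise commute. Together with the hypothesis $\dim Q = k$, this gives $k$ pairwise commuting semisimple nil-independent derivations of $N$, which can be simultaneously diagonalized.

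Second, I would pick a common eigenbasis $\{e_1,\ldots,e_n\}$ of $N$ such that $e_1,\ldots,e_k$ project to a basis of $N/N^2$ (so they are generators) and $e_{k+1},\ldots,e_n \in N^2$. Denote by $\lambda_i : Q \to \mathbb{C}$ the weight of $e_i$. Because $N$ is generated by $e_1,\ldots,e_k$, any nonzero linear combination $\sum \alpha_j \mathrm{ad}_{x_j}\vert_N$ whose restriction to the generators is zero would act nilpotently on all of $N$, contradicting nil-independence. Hence $\lambda_1,\ldots,\lambda_k$ are linearly independent in $Q^*$, and I choose $x_1,\ldots,x_k \in Q$ to be the dual basis, giving $[e_i,x_j] = \delta_{ij} e_i$ for $1\le i,j\le k$.

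Third, for $i > k$, the fact that $e_i \in N^2$ together with the weight decomposition means each $e_i$ lies in a single weight space; after a further change of basis inside weight subspaces, I may take each $e_i$ to be an iterated bracket of generators, say $e_i = [[\ldots[e_{j_1},e_{j_2}],\ldots],e_{j_r}]$. Since $\mathrm{ad}_{x_j}$ is a derivation,
\[
[e_i,x_j] \;=\; \Bigl(\sum_{s=1}^{r}\lambda_{j_s}(x_j)\Bigr)\, e_i \;=\; \alpha_{i,j}\,e_i,
\]
where $\alpha_{i,j}$ counts how many factors $e_{j_s}$ equal $e_j$. The relation $[e_i,e_j] = \sum_{t=k+1}^{n}\gamma^t_{i,j} e_t$ is automatic because $[N,N]\subseteq N^2 = \mathrm{span}(e_{k+1},\ldots,e_n)$.

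The main obstacle is verifying that $\alpha_{i,j}$ is well-defined, since the representation of $e_i$ as an iterated bracket need not be unique. This is resolved by observing that $\alpha_{i,j}$ equals the intrinsic quantity $\lambda_i(x_j)$, which depends only on the weight of $e_i$ and not on the chosen bracketing; the additivity of weights under bracket then forces the stated combinatorial interpretation. A secondary technical point is arranging, via Jordan decomposition inside $R$, that $Q$ is a genuine semisimple complement to $N$ (so that the hypothesis of Mubarakzjanov's method applies); this is standard in characteristic zero and uses that $\mathrm{ad}_R$ preserves $N$.
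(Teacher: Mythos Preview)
The paper does not actually prove this theorem: it is quoted verbatim from \cite{Khal} and used as a black box, so there is no in-paper argument to compare your sketch against.

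Your outline captures the right mechanism --- simultaneous diagonalisation of the $Q$-action on $N$, linear independence of the weights $\lambda_1,\ldots,\lambda_k$ on generators via nil-independence, dualising to get $[e_i,x_j]=\delta_{ij}e_i$, and additivity of weights under iterated brackets. The observation that $\alpha_{i,j}=\lambda_i(x_j)$ is intrinsic, so the ``number of entries of $e_j$'' is well-defined independently of the bracketing, is exactly the right point.

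There is, however, one genuine omission. The paper's convention is that omitted products vanish, so the theorem asserts $[x_i,x_j]=0$, and you never address this. Commutativity of the operators $\mathrm{ad}_{x_i}|_N$ only gives $[x_i,x_j]\in Z(N)$; it does not force $[x_i,x_j]=0$ in $R$. Indeed, already when $N$ is abelian one can have $[x_1,x_2]=c_1e_1+c_2e_2$ with the Jacobi identity imposing no constraint on $c_1,c_2$, so a further change $x_i\mapsto x_i+z_i$ with suitable $z_i\in N$ is unavoidable. In the general case such a shift adds the nilpotent perturbation $\mathrm{ad}_{z_i}|_N$ to the diagonal action, so one has to interleave the adjustment with a re-diagonalisation, typically by working down the lower central filtration $N\supset N^2\supset\cdots$ and using that every $Q$-weight on each $N^m/N^{m+1}$ is nonzero (the ``degree'' $\sum_l\alpha_{t,l}\ge 1$). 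Some version of this step is essential and is missing from your sketch. Your closing remark about Jordan decomposition acknowledges the semisimplicity issue but does not cover the vanishing of $[Q,Q]$.
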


For a nilpotent Leibniz algebra $L$ and $x\in L\setminus L^2$ we consider the decreasing sequence $C(x)=(n_1,n_2,
\ldots,n_k)$ with respect to the lexicographical order of the dimensions Jordan's blocks of the operator $\mathcal{R}_x$.

\begin{defn}  The sequence $C(L)=\max\limits_{x\in L \setminus L^2}C(x)$
is called the {\it characteristic sequence} of the Leibniz algebra $L$.
\end{defn}

In the paper \cite{Ancochea} it is considered the cohomological properties of a solvable Lie algebra whose nilpotent radical has a given characteristic sequence $(n_1, n_2, \ldots, n_k,1)$ and complementary subspace to nilpotent radical has dimension equal to $k+1$.

For characteristic sequence $(n_1, n_2, \ldots, n_k, 1)$ we consider the model nilpotent Lie algebra $\mathfrak{n}_c$ given by its non-zero products:
$$\begin{array}{lll}
[e_i,e_1]=-[e_1,e_i]=e_{i+1}, &2 \leq i \leq n_1, &\\[1mm]
[e_{n_1+\ldots+n_{j}+i},e_1]=-[e_1,e_{n_1+\ldots+n_{j}+i}]=e_{n_1+\ldots+n_{j}+1+i},& 2\leq i\leq n_{j+1}, \ 1\leq j\leq k-1.\\[1mm]
\end{array}$$

Due to Theorem \ref{thmLie} a solvable Lie algebra with nilpotent radical $\mathfrak{n}_c$ and $(k+1)$-dimensional complementary subspace to $\mathfrak{n}_c$ is unique. For our convenience we present its table of multiplication in the following way:
$$\mathfrak{r}_c: \left\{ \begin{array}{lll}
[e_i,e_1]=-[e_1,e_i]=e_{i+1}, &2 \leq i \leq n_1, &\\[1mm]
[e_{n_1+\ldots+n_{j}+i},e_1]=-[e_1,e_{n_1+\ldots+n_{j}+i}]=e_{n_1+\ldots+n_{j}+1+i},& 2\leq i\leq n_{j+1}, \\[1mm]
[e_1,x_1]=-[x_1,e_1]=e_1, &\\[1mm]
[e_i,x_1]=-[x_1,e_i]=(i-2)e_i,& 3\leq i \leq n_1+1,\\[1mm]
[e_{n_1+\ldots+n_{j}+i},x_1]=-[x_1, e_{n_1+\ldots+n_{j}+i}]=(i-2)e_{n_1+\ldots+n_{j}+i} & 2\le i\leq n_{j+1},\\[1mm]
[e_i,x_{2}]=-[x_{2}, e_i]=e_{i}, & 2\le i\leq n_1+1,\\[1mm]
[e_{n_1+\ldots+n_{j}+i},x_{j+2}]=-[x_{j+2}, e_{n_1+\ldots+n_{j}+i}]=
e_{n_1+\ldots+n_{j}+i}, & 2\le i\leq n_{j+1}.\\[1mm]
\end{array}\right.$$
where $1\leq j\leq k-1.$

Here we present the main result of the paper \cite{Ancochea}.
\begin{thm} \label{thmAncochea} For any characteristic sequence $(n_1, \ldots, n_k, 1),$ the model nilpotent Lie algebra $\mathfrak{n}_c$ arises as the nilpotent radical of a solvable Lie algebra $\mathfrak{r}_c$ such that
$$H^a(\mathfrak{r}_c,\mathfrak{r}_c ) = 0, \quad 0 \leq a \leq 3.$$
\end{thm}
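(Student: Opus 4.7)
The first assertion is essentially built into the construction: the multiplication table displayed above exhibits $\mathfrak{n}_c$ as a nilpotent ideal of $\mathfrak{r}_c$ with $(k+1)$-dimensional complementary subspace $T=\langle x_1,x_2,\ldots,x_{k+1}\rangle$. Solvability of $\mathfrak{r}_c$ follows from Theorem~\ref{thmsolv}, since $\mathfrak{r}_c^2\subseteq \mathfrak{n}_c$ is nilpotent, and uniqueness of $\mathfrak{r}_c$ is immediate from Theorem~\ref{thmLie} because $\dim T = k+1 = \dim \mathfrak{n}_c/\mathfrak{n}_c^2$.

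For the cohomology vanishing, the plan is to invoke the Hochschild--Serre spectral sequence associated with the ideal $\mathfrak{n}_c\subset \mathfrak{r}_c$,
$$E_2^{p,q} = H^p\bigl(T,\,H^q(\mathfrak{n}_c,\mathfrak{r}_c)\bigr) \;\Longrightarrow\; H^{p+q}(\mathfrak{r}_c,\mathfrak{r}_c).$$
Since $T$ is abelian and acts semisimply on $\mathfrak{n}_c$ and $\mathfrak{r}_c$, for any finite-dimensional $T$-module $V$ the cohomology $H^p(T,V)$ reduces to the zero-weight component of $V\otimes \wedge^p T^*$. The problem therefore collapses to an analysis of the zero-weight subcomplex of $C^\bullet(\mathfrak{n}_c,\mathfrak{r}_c) = \wedge^\bullet \mathfrak{n}_c^*\otimes \mathfrak{r}_c$.

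The weights of $T$ on $\mathfrak{n}_c$ are readable directly off the table: $x_1$ acts on each $e_i$ by the position of $e_i$ inside its Jordan chain, while $x_{j+2}$ acts as the characteristic function of the $(j+1)$-st Jordan block; dualising to $\mathfrak{n}_c^*$ flips signs, and the weight of a wedge $e_{i_1}^*\wedge\cdots\wedge e_{i_a}^*\otimes y$ is the sum of the factor weights. I would then enumerate, for each $a\in\{0,1,2,3\}$, the zero-weight cochains and check that the induced cohomology vanishes. Degree $0$ reduces to $\mathrm{Center}(\mathfrak{r}_c)=0$, and degree $1$ to showing that every zero-weight derivation is inner; both are routine on the short list of zero-weight elements.

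The main obstacle lies in degrees $2$ and $3$, where the combinatorics of zero-weight wedges expands rapidly. The key structural input is that in $\mathfrak{n}_c$ every non-trivial bracket is an action of $e_1$, so the Chevalley--Eilenberg differential is very rigid: a zero-weight candidate cocycle must be supported on wedges involving $e_1^*$ together with one further factor taken from a single Jordan block. Each such candidate is then either recognised as a coboundary by exhibiting an explicit primitive (typically a cochain obtained by shifting one index down the Jordan chain) or is seen to vanish by direct application of the Leibniz/Jacobi identity. Combining these cancellations with the Hochschild--Serre reduction above yields $H^a(\mathfrak{r}_c,\mathfrak{r}_c)=0$ for $0\le a\le 3$.
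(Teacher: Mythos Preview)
The paper does not actually prove this statement: Theorem~\ref{thmAncochea} is quoted in the Preliminaries as ``the main result of the paper \cite{Ancochea}'' and is used thereafter as a black box. So there is no proof in the present paper to compare against. That said, the introduction tells us that \cite{Ancochea} establishes the vanishing ``using Hochschild--Serre factorization theorem,'' which is precisely the route you have taken; in that sense your proposal is aligned with the original source rather than with anything done in this paper.

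As for the proposal itself, the reduction is set up correctly: the Hochschild--Serre spectral sequence for the pair $\mathfrak{n}_c\lhd\mathfrak{r}_c$, together with the fact that $T$ is abelian and acts diagonally, does reduce the problem to showing that the $T$-invariant part of $H^q(\mathfrak{n}_c,\mathfrak{r}_c)$ vanishes for $q\le 3$. Your treatment of $q=0,1$ is fine. The genuine gap is in degrees $2$ and $3$: you assert that zero-weight cocycles ``must be supported on wedges involving $e_1^*$ together with one further factor taken from a single Jordan block,'' but this is not obvious and is in fact where most of the work in \cite{Ancochea} lies. Zero-weight wedges can and do mix factors from different blocks (the weights under $x_2,\ldots,x_{k+1}$ are $0/1$ indicators of block membership, so cancellation across blocks is possible once one tensors with $\mathfrak{r}_c$), and the combinatorial bookkeeping needed to show all such cocycles are coboundaries is substantial. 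Your last paragraph describes an intention rather than an argument; to turn this into a proof you would need to carry out that case analysis explicitly, as \cite{Ancochea} does.
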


\

\subsection{Cohomology of Leibniz algebras}

\

\

We call a vector space $M$ a {\it module over a Leibniz algebra} $L$ if there are two bilinear maps:
$$[-,-] \colon L\times M \rightarrow M \qquad \text{and} \qquad [-,-] \colon M\times L \rightarrow M$$
satisfying the following three axioms
\begin{align*}
[m,[x,y]] & =[[m,x],y]-[[m,y],x],\\
[x,[m,y]] & =[[x,m],y]-[[x,y],m],\\
[x,[y,m]] & =[[x,y],m]-[[x,m],y],
\end{align*}
for any $m\in M$, $x, y \in L$.

For a Leibniz algebra $L$ and module $M$ over $L$ we consider the spaces
$${\rm CL}^0(L,M) = M, \quad {\rm CL}^n(L,M)={\rm Hom} (L^{\otimes n}, M), \ n > 0.$$

Let $d^n : {\rm CL}^n(L,M) \rightarrow {\rm CL}^{n+1}(L,M)$ be an
$\mathbb{C}$-homomorphism defined by
 \begin{multline*}
(d^n\varphi)(x_1, \ldots , x_{n+1}): = [x_1,\varphi(x_2,\ldots,x_{n+1})]
+\sum\limits_{i=2}^{n+1}(-1)^{i}[\varphi(x_1,
\ldots, \widehat{x}_i, \ldots , x_{n+1}),x_i]\\
+\sum\limits_{1\leq i<j\leq {n+1}}(-1)^{j+1}\varphi(x_1, \ldots,
x_{i-1},[x_i,x_j], x_{i+1}, \ldots , \widehat{x}_j, \ldots
,x_{n+1}),
\end{multline*}
where $\varphi\in {\rm CL}^n(L,M)$ and $x_i\in L$. The property $d^{n+1}\circ d^n=0$ leads that the derivative
operator $d=\sum\limits_{i \geq 0}d^i$ satisfies the property
$d\circ d = 0$. Therefore, the $n$-th cohomology group is well defined by
$${\rm HL}^n(L,M): = {\rm ZL}^n(L,M) / {\rm BL}^n(L,M),$$
where the elements
${\rm ZL}^n(L,M):={\rm Ker} \  d^{n+1}$ and ${\rm BL}^n(L,M):={\rm Im} \ d^n$ are called {\it
$n$-cocycles} and {\it $n$-coboundaries}, respectively.

In the case of $n=2$ we give explicit expressions for elements ${\rm ZL}^2(L,L)$ and ${\rm BL}^2(L,L).$ Namely, elements $\psi\in {\rm BL}^2(L,L)$ and $\varphi \in {\rm ZL}^2(L,L)$ are defined by:
\begin{equation}\label{eq4}
\psi(x,y)=[d(x),y] + [x,d(y)] - d([x,y]) \,\, \mbox{for some linear map} \,\, d\in {\rm Hom}(L,L),
\end{equation}
\begin{equation}\label{eq5}
[x,\varphi(y,z)] - [\varphi(x,y), z] +[\varphi(x,z), y] +\varphi(x,[y,z]) - \varphi([x,y],z)+\varphi([x,z],y)=0.
\end{equation}

In terms of cohomology groups the notion of completeness of a Leibniz algebra $L$ means that it is centerless and ${\rm HL}^1(L,L)=0$.

\begin{defn} A Leibniz algebra $L$ is called cohomologically rigid if ${\rm HL}^2(L,L)=0.$
\end{defn}

\begin{rem} \label{rem1} For a centerless Lie algebra $G$ it is known that ${\rm H}^2(G,G)={\rm HL}^2(G,G)$
(see Corollary 2 of \cite{Alice}).
\end{rem}

\section{Main Part}

Let us consider the following family of nilpotent Leibniz algebras $L(\alpha_i, \beta_j)$ with $1\leq i \leq k+1, 1\leq j \leq k$ with a given table of multiplications:
$$\left\{\begin{array}{lll}
[e_i,e_1]=e_{i+1}, &2 \leq i \leq n_1, &\\[1mm]
[e_1,e_i]=-e_{i+1}, &3 \leq i \leq n_1, &\\[1mm]
[e_{n_1+\ldots+n_{j}+i},e_1]=e_{n_1+\ldots+n_{j}+1+i},& 2\leq i\leq n_{j+1}, \ 1\leq j\leq k-1,\\[1mm]
[e_1,e_{n_1+\ldots+n_{j}+i}]=-e_{n_1+\ldots+n_{j}+1+i},& 3\leq i\leq n_{j+1}, \ 1\leq j\leq k-1,\\[1mm]
[e_1,e_1]=\alpha_1h,&  \\[1mm]
[e_2,e_2]=\alpha_2h,&  \\[1mm]
[e_{n_1+\ldots+n_{i}+2},e_{n_1+\ldots+n_{i}+2}]=\alpha_{i+2}h,& 1\leq i\leq k-1. \\[1mm]
[e_1,e_2]=-e_3+\beta_1h,&  \\[1mm]
[e_1,e_{n_1+\ldots+n_{i}+2}]=-e_{n_1+\ldots+n_{i}+3}+\beta_{i+1}h,& 1\leq i\leq k-1, \\[1mm]
\end{array}\right.$$
where $n_1\geq n_2\geq \ldots n_k\geq 1$ and at least one of the parameters $\alpha_i, \beta_j$  is non-zero.



One can assume that $\alpha_1\neq 0.$ Indeed, if $\alpha_1=0$, then taking the following change of the basis
$$e'_1=A_1e_1+A_2e_2+\sum_{i=1}^{k-1}B_ie_{n_1+\ldots+n_{i}+2}, \quad e'_2=e_2, \quad
e'_{i+1}=[e'_i,e'_1], \quad 2 \leq i \leq n_1, \quad \ h'=h,$$
 $$e'_{n_1+\ldots+n_{j}+2}=e_{n_1+\ldots+n_{j}+2}, \quad e'_{n_1+\ldots+n_{j}+1+i}=[e'_{n_1+\ldots+n_{j}+i},e'_1], \quad  2\leq i\leq n_{j+1}, \ 1\leq j\leq k-1,$$
we have
$$[e'_1,e'_1]=(A_2^2\alpha_2+\sum_{i=1}^{k-1}B_i^2\alpha_{i+2}+A_1A_2\beta_1
+A_1\sum_{i=1}^{k-1}B_i^2\alpha_{i+2}\beta_{i+1})h'.$$

 Taking into account that at least one of the parameters $\alpha_i, \beta_j$  is non-zero, we always can chose values $A_1, A_2, B_i$ such that
 $$A_2^2\alpha_2+\sum_{i=1}^{k-1}B_i^2\alpha_{i+2}+A_1A_2\beta_1
+A_1\sum_{i=1}^{k-1}B_i^2\alpha_{i+2}\beta_{i+1}\neq0.$$

Therefore, we can conclude that parameter $\alpha_1$ is non-zero. Now, scaling the basis element $h$ we can assume that $\alpha_1=1$, i.e., $[e_1,e_1]=h$.

Thus, we consider the family of nilpotent Leibniz algebras
$L(\alpha_i, \beta_i)$ with $1\leq i \leq k$ :
$$\left\{\begin{array}{lll}
[e_i,e_1]=e_{i+1}, &2 \leq i \leq n_1, &\\[1mm]
[e_1,e_i]=-e_{i+1}, &3 \leq i \leq n_1, &\\[1mm]
[e_{n_1+\ldots+n_{j}+i},e_1]=e_{n_1+\ldots+n_{j}+1+i},& 2\leq i\leq n_{j+1}, \ 1\leq j\leq k-1,\\[1mm]
[e_1,e_{n_1+\ldots+n_{j}+i}]=-e_{n_1+\ldots+n_{j}+1+i},& 3\leq i\leq n_{j+1}, \ 1\leq j\leq k-1,\\[1mm]
[e_1,e_1]=h,&  \\[1mm]
[e_2,e_2]=\alpha_1h,&  \\[1mm]
[e_{n_1+\ldots+n_{i}+2},e_{n_1+\ldots+n_{i}+2}]=\alpha_{i+1}h,& 1\leq i\leq k-1. \\[1mm]
[e_1,e_2]=-e_3+\beta_1h,&  \\[1mm]
[e_1,e_{n_1+\ldots+n_{i}+2}]=-e_{n_1+\ldots+n_{i}+3}+\beta_{i+1}h,& 1\leq i\leq k-1, \\[1mm]
\end{array}\right.$$
where $n_1\geq n_2\geq \ldots n_k\geq 1$.

\

\subsection{Particular case}\label{subsection31}

\

\

In order to avoid routine calculations which involve many indexes we limit ourselves to the family $L(\alpha_1, \alpha_2, \beta_1, \beta_2)$  with the following table of multiplications:
$$\left\{\begin{array}{lll}
[e_i,e_1]=e_{i+1}, &2 \leq i \leq n_1, &\\[1mm]
[e_1,e_i]=-e_{i+1}, &3 \leq i \leq n_1, &\\[1mm]
[f_{i},e_1]=f_{i+1},& 1\leq i\leq n_2-1,& \\[1mm]
[e_1,f_{i}]=-f_{i+1},& 2\leq i\leq n_2-1,& \\[1mm]
[e_{1},e_1]=h,& [e_{2},e_2]=\alpha_2h,&  [f_{1},f_1]=\alpha_3h,\\[1mm]
[e_{1},e_2]=-e_3+\beta_1h,& [e_{1},f_1]=-f_2+\beta_2h.&\\[1mm]
\end{array}\right.$$

\begin{prop} \label{propder}
	Any derivation of the algebra $L(\alpha_1, \alpha_2, \beta_1, \beta_2)$ has the following matrix form:
$$\mathbb{D}=\begin{pmatrix}
A&B\\
C&D
\end{pmatrix}, \ \mbox{	where}$$
$$\begin{array}{l}
	A=\displaystyle\sum_{j=1}^{n_1+1} \lambda_j e_{1,j}+\displaystyle\sum_{i=2}^{n_1+1} ((i-2)\lambda_1+\gamma_2) e_{i,i}+\displaystyle\sum_{i=2}^{n_1}\sum_{j=i+1}^{n_1+1} \gamma_{j-i+2} e_{i,j},\quad
	C=\displaystyle\sum_{i=1}^{n_2}\sum_{j=i+1}^{n_1+1} \theta_{j-i+1} e_{i,j},\\
	B=\displaystyle\sum_{j=1}^{n_2} \mu_j e_{1,j}+\displaystyle\sum_{i=1}^2 c_i e_{i,n_2+1}+(\lambda_2\alpha_1)e_{3,n_2+1}+\displaystyle\sum_{i=2}^{n_2}\sum_{j=i-1}^{n_2} \delta_{j-i+2} e_{i,j},\\
	D=\displaystyle\sum_{i=1}^{n_2} ((i-1)\lambda_1+\nu_1) e_{i,i}+c_3 e_{1,n_2+1}+\displaystyle\sum_{i=2}^{n_2}\sum_{j=i+1}^{n_2} \nu_{j-i+1} e_{i,j}+(\mu_1\alpha_2)e_{2,n_2+1}+m e_{n_2+1,n_2+1}
	\end{array}$$
$m=(2\lambda_1+\lambda_2\beta_1+\mu_1\beta_2),$ $A\in M_{n_1+1,n_1+1},$ $B\in M_{n_1+1,n_2+1},$ $C\in M_{n_2+1,n_1+1},$
$D\in M_{n_2+1,n_2+1}$ and matrix units $e_{i,j}$ and with the restrictions:
$$\left\{
\begin{array}{lll}
\alpha_1\theta_2+\alpha_2\delta_1=0, \\[1mm]
-2\lambda_2\alpha_1+\lambda_1\beta_1+\lambda_2\beta_1^2+\mu_1\beta_1\beta_2-\gamma_2\beta_1-\delta_1\beta_2=0,&\\[1mm]
-2\mu_1\alpha_2+\lambda_1\beta_2+\lambda_2\beta_1\beta_2+\mu_1\beta_2^2-\theta_2\beta_1-\nu_1\beta_2=0,&\\[1mm]
\alpha_1(2\lambda_1+\lambda_2\beta_1+\mu_1\beta_2-2\gamma_2)=0,&\\[1mm]
\alpha_2(2\lambda_1+\lambda_2\beta_1+\mu_1\beta_2-2\nu_1)=0.&\\[1mm]
\end{array}\right.
$$
Moreover, if $n_1 > n_2$, then $\theta_j=0$ with $2\leq j\leq n_1-n_2+1.$
\end{prop}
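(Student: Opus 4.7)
The plan is to carry out a direct computation: parametrize a generic linear map $d\colon L\to L$ and impose the derivation identity $d([x,y])=[d(x),y]+[x,d(y)]$ on every pair of basis elements with a non-trivial bracket. I would order the basis as $\{e_1,\ldots,e_{n_1+1},f_1,\ldots,f_{n_2},h\}$ and expand $d(e_1)$, $d(e_2)$, $d(f_1)$, $d(h)$ as arbitrary linear combinations with unknown scalar coefficients. The remaining images $d(e_i)$ for $i\geq 3$ and $d(f_j)$ for $j\geq 2$ will be forced by the chain recursions below.

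The key propagation comes from the chain products $[e_i,e_1]=e_{i+1}$ and $[f_i,e_1]=f_{i+1}$: the derivation identity gives
\[
  d(e_{i+1})=[d(e_i),e_1]+[e_i,d(e_1)], \qquad d(f_{i+1})=[d(f_i),e_1]+[f_i,d(e_1)],
\]
which, iterated, use the Jordan-block action of $\R_{e_1}$ to translate the initial data into the upper-triangular, Toeplitz-like shapes of $A$ and $D$, with the specific diagonals $(i-2)\lambda_1+\gamma_2$ and $(i-1)\lambda_1+\nu_1$, and into the analogous diagonal patterns of $B$ and $C$. The dual identities $[e_1,e_i]=-e_{i+1}$ and $[e_1,f_i]=-f_{i+1}$ then serve as consistency checks and eliminate the off-diagonal parameters not included in the stated form.

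The five quadratic/twisted relations $[e_1,e_1]=h$, $[e_2,e_2]=\alpha_1 h$, $[f_1,f_1]=\alpha_2 h$, $[e_1,e_2]=-e_3+\beta_1 h$ and $[e_1,f_1]=-f_2+\beta_2 h$ supply the remaining information. Applying $d$ to the first determines $d(h)$ and forces the identity $m=2\lambda_1+\lambda_2\beta_1+\mu_1\beta_2$ on the $(h,h)$-entry of $D$. The two squares of $e_2$ and $f_1$ contribute the two homogeneous $\alpha$-equations; the two $\beta$-twisted relations contribute the mixed $\beta$-equations and, after cross-comparison of $h$-coefficients from the $[e_1,e_2]$ and $[e_2,e_1]$ sides, the coupling $\alpha_1\theta_2+\alpha_2\delta_1=0$.

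Finally, the ``moreover'' clause is a consequence of the same propagation: if $n_1>n_2$ and $d(e_j)$ has a non-zero $f_1$-component for some $j\leq n_1-n_2+1$, iterating the recursion pushes this component up the $f$-chain via $\R_{e_1}$ so that it would have to occupy a position beyond $f_{n_2}$ --- which does not exist --- by the time $d(e_{n_1+1})$ is reached, forcing $\theta_j=0$. The main obstacle I anticipate is careful bookkeeping across many coupled parameters: $\lambda,\mu,\gamma,\delta,\theta,\nu$ each appear in multiple recursions, and the $h$-components arising from the $\alpha$- and $\beta$-twisted relations must be matched consistently between the ``straight'' and ``twisted'' sides of the bracket identities.
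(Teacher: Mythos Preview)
Your plan is exactly the paper's approach: the paper's own proof is a single sentence stating that the result follows by ``straightforward checking the derivation property and using the table of multiplications,'' and your outline is a reasonable expansion of that computation.

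One small bookkeeping correction for the ``moreover'' clause: in the paper's convention the $\theta_j$ sit in the block $C$, so $\theta_2$ is the $e_2$-coefficient of $d(f_1)$, not the $f_1$-coefficient of $d(e_j)$. The vanishing argument therefore runs the other way: propagating along the $f$-chain gives $d(f_{n_2})$ an $e_{n_2+1}$-component equal to $\theta_2$; the end-of-chain relation $[f_{n_2},e_1]=0$ then forces $[d(f_{n_2}),e_1]+[f_{n_2},d(e_1)]=0$, and when $n_2+1\le n_1$ the surviving term $\theta_2\,e_{n_2+2}$ (and similarly $\theta_j\,e_{n_2+j}$ for $j\le n_1-n_2+1$) must vanish. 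Same mechanism, opposite direction.
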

\begin{proof} The proof is carried out by straightforward checking the derivation property and using the table
of multiplications of the algebras $L(\alpha_1, \alpha_2, \beta_1, \beta_2).$
\end{proof}

\begin{lem} \label{lem1} Let $d$ be a derivation of the algebra $L(\alpha_1, \alpha_2, \beta_1, \beta_2)$. Then we have that coefficient  $d(h)|_{h}$ is 
$\epsilon_1+\epsilon_2,$ where $\epsilon_k \in \{  \nu_1, \lambda_1, \gamma_2 \}.$
\end{lem}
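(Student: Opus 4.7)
The plan is to read $d(h)|_h$ directly off the matrix in Proposition~\ref{propder}: its $(n_2+1,n_2+1)$-entry in the block $D$ is $m = 2\lambda_1+\lambda_2\beta_1+\mu_1\beta_2$, which is by construction the coefficient of $h$ in $d(h)$. The task therefore reduces to showing that the five restrictions attached to that proposition force $m$ into the shape $\epsilon_1+\epsilon_2$ with each $\epsilon_k\in\{\nu_1,\lambda_1,\gamma_2\}$, and I intend to do this by a case analysis on the structure constants $\alpha_1,\alpha_2,\beta_1,\beta_2$ of the algebra.

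The two easiest cases fall straight out of restrictions~(iv) and~(v). If $\alpha_1\neq 0$ then~(iv) gives $m-2\gamma_2=0$, hence $m=\gamma_2+\gamma_2$; symmetrically, $\alpha_2\neq 0$ forces $m=\nu_1+\nu_1$ via~(v). So from now on I may assume $\alpha_1=\alpha_2=0$, which trivialises~(i), (iv), (v), and I work with restrictions~(ii) and~(iii) instead. If in addition $\beta_1=\beta_2=0$, then $m=2\lambda_1=\lambda_1+\lambda_1$ by inspection. If exactly one $\beta_i$ vanishes, say $\beta_1=0$ and $\beta_2\neq 0$, restriction~(ii) collapses to $\delta_1\beta_2=0$ and hence $\delta_1=0$, while~(iii) becomes $\beta_2(\lambda_1+\mu_1\beta_2-\nu_1)=0$, which gives $\mu_1\beta_2=\nu_1-\lambda_1$; substituting back yields $m=\lambda_1+\nu_1$. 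The symmetric sub-case $\beta_1\neq 0,\beta_2=0$ produces $m=\lambda_1+\gamma_2$ by the identical argument.

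The main obstacle is the remaining sub-case $\alpha_1=\alpha_2=0$ with both $\beta_1$ and $\beta_2$ non-zero. Here restrictions~(ii) and~(iii) yield only the pair of equivalent expressions
\[ m \;=\; \lambda_1+\gamma_2+\frac{\delta_1\beta_2}{\beta_1} \;=\; \lambda_1+\nu_1+\frac{\theta_2\beta_1}{\beta_2}, \]
and neither correction term is a priori zero. To kill them I plan to invoke the concluding clause of Proposition~\ref{propder}: whenever $n_1>n_2$ the equality $\theta_2=0$ is automatic, which instantly collapses the right-hand expression to $m=\lambda_1+\nu_1$. The equal-length case $n_1=n_2$ is the genuinely delicate point; I expect to handle it by exploiting the natural symmetry between the two Jordan chains of equal length, or equivalently by one extra derivation-compatibility computation applied to a null bracket such as $[f_1,e_2]=0$, to force $\delta_1=0$ and obtain $m=\lambda_1+\gamma_2$. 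Once that residual point is dispatched, every case places $m$ into the set $\{2\lambda_1,\,2\gamma_2,\,2\nu_1,\,\lambda_1+\gamma_2,\,\lambda_1+\nu_1,\,\gamma_2+\nu_1\}$, which is exactly the required form.
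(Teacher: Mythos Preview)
Your case analysis tracks the paper's, and you correctly isolate the only delicate sub-case: $\alpha_1=\alpha_2=0$, $\beta_1\neq 0$, $\beta_2\neq 0$, $n_1=n_2$. However, the fix you propose does not close the gap. Applying the derivation property to $[f_1,e_2]=0$ gives $[d(f_1),e_2]+[f_1,d(e_2)]=0$; in this sub-case $\alpha_1=\alpha_2=0$, so every relevant bracket $[e_j,e_2]$, $[f_j,e_2]$, $[f_1,e_j]$, $[f_1,f_j]$ ($j\ge 2$), $[h,e_2]$, $[f_1,h]$ vanishes, and both terms are identically zero --- no constraint is produced. The ``symmetry'' argument fares no better: swapping the two equal-length chains just exchanges your two expressions for $m$ without forcing either correction term to vanish. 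In fact, with $n_1=n_2$ the five restrictions of Proposition~\ref{propder} leave $\theta_2$ and $\delta_1$ free up to a single linear relation, so $m=\lambda_1+\nu_1+\theta_2\beta_1/\beta_2$ is genuinely not pinned down in the \emph{original} basis.

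The paper sidesteps this entirely. Whenever $\beta_2\neq 0$ (regardless of $\beta_1$) it performs the change of basis
\[
e_1'=e_1,\quad e_i'=\beta_2 e_i-\beta_1 f_{i-1}\ (2\le i\le n_2+1),\quad e_i'=\beta_2 e_i\ (i>n_2+1),\quad f_i'=f_i,
\]
which stays inside the family and forces $\beta_1'=0$. After this normalisation your own argument for the sub-case ``$\beta_1=0$, $\beta_2\neq 0$'' applies verbatim and yields $m=\lambda_1+\nu_1$ (in the new parameters; note $e_1'=e_1$ and $f_1'=f_1$, so $\lambda_1$ is unchanged while $\nu_1$ absorbs the correction). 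This basis change is the missing ingredient; once you insert it, the separate $n_1>n_2$ versus $n_1=n_2$ split becomes unnecessary.
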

\begin{proof} Let us consider the following cases:
	\begin{enumerate}
	\item $\alpha_2\neq 0$. In this case, by applying the derivation conditions we have $2\lambda_1+\lambda_2\beta_1+\mu_1\beta_2-2\nu_1=0$ then $d(h)=2\nu_1h.$
	\item $\alpha_2=0$ and $\alpha_1\neq 0.$ Similar to the above case we have $d(h)=2\gamma_2h.$
	\item $\alpha_2=0$ and $\alpha_1= 0.$ We consider the following:
	\begin{enumerate}
		
		\item $\beta_2\neq 0$. Making the following change of basis:
		$e'_1=e_1,\ e'_i=\beta_2e_i-\beta_1 f_{i-1},\ 2\leq i\leq n_2+1, $ $e_i=\beta_2 e_i, n_2+2\leq i\leq n_1+1$ and $f'_i=f_i,\ 1\leq i\leq n_2,$ we can suppose $\beta'_1=0$ and by restrictions we have that $\mu_1\beta_2=\nu_1-\lambda_1.$ Hence, $d(h)=(\lambda_1+\nu_1)h.$

	\item $\beta_2=\beta_1=0.$ Then $d(h)=2\lambda_1h.$
\item $\beta_2=0,\ \beta_1\neq 0.$ By restrictions we have that $\lambda_2\beta_1=\gamma_2-\lambda_1.$ Therefore, $d(h)=(\lambda_1+\gamma_2)h.$

	\end{enumerate}
	\end{enumerate}

\end{proof}

\begin{lem} \label{lem2} The number of nil-independent derivations of the algebra $L(\alpha_1, \alpha_2, \beta_1, \beta_2)$ is equal to 4.
\end{lem}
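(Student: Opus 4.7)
The argument rests on the explicit parameterization of derivations given by Proposition \ref{propder}. The first observation is that the diagonal entries of the matrix $\mathbb{D}$ are linear combinations of exactly four scalars, namely $\lambda_1$, $\gamma_2$, $\nu_1$, and $m = 2\lambda_1+\lambda_2\beta_1+\mu_1\beta_2$: the $(i,i)$ entry of the $A$-block is $(i-2)\lambda_1+\gamma_2$ for $i\geq 2$ (and $\lambda_1$ at $i=1$), the $(i,i)$ entry of the $D$-block is $(i-1)\lambda_1+\nu_1$ for $1\leq i\leq n_2$ (and $m$ at $i=n_2+1$). After refining the basis by the lower central filtration of $L$, the matrix $\mathbb{D}$ becomes block upper triangular, and its eigenvalues coincide with these diagonal entries. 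Hence a derivation is nilpotent if and only if $\lambda_1=\gamma_2=\nu_1=m=0$, and the maximum number of nil-independent derivations equals the dimension of the image of the linear functional
\[
\Psi:\mathrm{Der}(L)\longrightarrow \mathbb{C}^{4},\qquad d\longmapsto (\lambda_1,\gamma_2,\nu_1,m).
\]

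The upper bound $\leq 4$ is immediate: five derivations give five vectors in $\mathbb{C}^{4}$, so a non-trivial linear relation among their $\Psi$-images produces a non-trivial combination with vanishing diagonal, hence nilpotent. For the lower bound my plan is to exhibit four concrete derivations $d_1,d_2,d_3,d_4$ of $L(\alpha_1,\alpha_2,\beta_1,\beta_2)$ whose $\Psi$-values form a basis of $\mathbb{C}^{4}$. Each $d_i$ is obtained from the parametric form of Proposition \ref{propder} by switching on one of the four diagonal parameters and tuning the auxiliary parameters $\lambda_2,\mu_1,\delta_1,\theta_2,c_i,\dots$ so that the constraints R1--R5 are satisfied; in particular the off-diagonal freedom in $\lambda_2$ and $\mu_1$ is used (in subcases where $\beta_1\neq 0$ or $\beta_2\neq 0$) to decouple $m$ from the triple $(\lambda_1,\gamma_2,\nu_1)$.

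The construction splits into the same five subcases as the proof of Lemma \ref{lem1}, according to which of $\alpha_1,\alpha_2,\beta_1,\beta_2$ vanish. In each subcase Lemma \ref{lem1} specifies the form of $d(h)|_{h}$ in terms of $\{\lambda_1,\gamma_2,\nu_1\}$, and R4--R5 specify the coupling of $m$ to $\gamma_2$ or $\nu_1$; the four candidate derivations must be adjusted accordingly. The main obstacle is precisely this bookkeeping: one must verify that in every subcase the four tuples $\Psi(d_i)$ remain linearly independent after imposing the relevant restrictions, and in the more degenerate subcases (those of Lemma \ref{lem1} where $\alpha_1=\alpha_2=0$) one must carefully use the Leibniz-specific freedom attached to the central element $h$ in order to produce the fourth independent direction.
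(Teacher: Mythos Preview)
Your proposal has a genuine gap: the fourth parameter you single out, $m=2\lambda_1+\lambda_2\beta_1+\mu_1\beta_2$, is \emph{not} independent of $(\lambda_1,\gamma_2,\nu_1)$. This is exactly the content of Lemma~\ref{lem1}: in each of the five subcases the constraints of Proposition~\ref{propder} force $m=d(h)|_{h}$ to equal $2\nu_1$, $2\gamma_2$, $\lambda_1+\nu_1$, $2\lambda_1$, or $\lambda_1+\gamma_2$. Consequently the image of your map $\Psi(d)=(\lambda_1,\gamma_2,\nu_1,m)$ is at most three-dimensional, and your upper/lower bound argument can certify at most three nil-independent derivations, not four. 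Your plan to ``decouple $m$ from the triple $(\lambda_1,\gamma_2,\nu_1)$'' using $\lambda_2,\mu_1$ cannot work, because the relations R2--R5 are precisely what tie $\lambda_2\beta_1$ and $\mu_1\beta_2$ back to $\lambda_1,\gamma_2,\nu_1$.

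The missing fourth direction is not $m$ but the product $\delta_1\theta_2$. Your filtration argument does make $\mathbb{D}$ block upper triangular with respect to the lower central series, but the diagonal block on $L/L^{2}$ is the $3\times 3$ matrix
\[
\begin{pmatrix}\lambda_1 & \lambda_2 & \mu_1\\ 0 & \gamma_2 & \delta_1\\ 0 & \theta_2 & \nu_1\end{pmatrix}
\]
(in the basis $\overline{e}_1,\overline{e}_2,\overline{f}_1$), and its eigenvalues are $\lambda_1$ together with the roots of $t^{2}-(\gamma_2+\nu_1)t+(\gamma_2\nu_1-\delta_1\theta_2)$. Thus the eigenvalues of $\mathbb{D}$ do \emph{not} coincide with its naive diagonal entries; the off-diagonal pair $(\delta_1,\theta_2)$ contributes. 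The paper's proof makes this explicit by computing $\mathbb{D}^{2}$ and observing that $BC$ and $CB$ put $\delta_1\theta_2$ on the diagonal, whence nilpotency of $\mathbb{D}$ is equivalent to $\lambda_1=\gamma_2=\nu_1=\delta_1\theta_2=0$. The fourth nil-independent derivation $d_4$ is then any derivation with $\lambda_1=\gamma_2=\nu_1=0$ but $\delta_1\theta_2\neq 0$; this is the piece your outline does not reach.
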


\begin{proof} We are going to prove that the matrix $\mathbb{D}$ is a nilpotent matrix if and only if $\lambda_1=\gamma_2=\nu_1=\delta_1\theta_2=0.$ By Lemma \ref{lem1}, we have that $d(h)=(a_1\lambda_1+a_2\gamma_2+\alpha_3 \nu_1)h.$

According Proposition \ref{propder} we have
$$\mathbb{D}=\left(\begin{array}{cc}
A&B\\
C&D\\
\end{array}\right)=\left(\begin{array}{cc}
A_1+A_2&B\\
C&D_1+D_2\\
\end{array}\right), \ \mbox{where}$$
$$\begin{array}{l}
	 	 A_1=diag\{\lambda_1,\gamma_2,\lambda_1+\gamma_2,\ldots,(n_1-1)\lambda_1+\gamma_2\},\\[1mm] D_1=diag\{\nu_1,\lambda_1+\nu_1,2\lambda_1+\nu_1,\ldots, (n_2-1)\lambda_1+\nu_1, a_1\lambda_1+a_2\gamma_2+\alpha_3 \nu_1\}
	 	 \end{array}$$ are diagonal matrices, $A_2, D_2, C$ are strictly upper triangular matrices and the matrix $B$ is upper triangular matrix with non-zero diagonal under the main diagonal such that $A\in M_{n_1+1,n_1+1},$ $B\in M_{n_1+1,n_2+1},$ $C\in M_{n_2+1,n_1+1}$ and $D\in M_{n_2+1,n_2+1}.$

Note that matrices $A_1A_2, \ A_2^2, \ D_1D_2, \ D_1^2$ are nilpotent, the matrices $C(A_1+A_2)$ and $(D_1+D_2)C$ have the same type pattern as $C$ (that is if any entry of $C$ is $0,$ the entry of $C(A_1+A_2)$ and the entry of  $(D_1+D_2)C$ at the same position is zero, as well). Likewise, the matrix $(A_1+A_2)B$ and $ B(D_1+D_2)$ has the same pattern as $B$.

It is easy to see that $BC=K_1+K_2$ with diagonal matrix $K_1=diag\{\underbrace{0,\delta_1\theta_2,\delta_1\theta_2,\ldots,\delta_1\theta_2}_{n_2},0,\ldots,0\}$  and strictly upper triangular matrix $K_2.$ Similarly, $CB=Z_1+Z_2$ with diagonal matrix $Z_1=diag\{\delta_1\theta_2,\delta_1\theta_2,\ldots,\delta_1\theta_2,0\}$ and strictly upper triangular matrix $Z_2.$
	
According to the above arguments we have the following  formula:
	$$\mathbb{D}^2=\left(\begin{array}{cc}
	\widetilde{A}_1+\widetilde{A}_{2}&\widetilde{B}\\
	\widetilde{C}&\widetilde{D}_1+\widetilde{D}_2\\
	\end{array}\right), $$
where $\widetilde{A}_{2}, \ \widetilde{D}_2-$ nilpotent matrices  and the matrices $\widetilde{B} $ and $ \widetilde{C}$ are the same type as $B$ and $C,$ respectively and $\widetilde{A}_1$ and $\widetilde{D}_1$ are the  following diagonal matrices:
	$$\begin{array}{l}
	\widetilde{A}_1=diag\{\lambda_1^2,\gamma_2^2+\delta_1\theta_2,(\lambda_1+\gamma_2)^2+\delta_1\theta_2,
\ldots,((n_2-2)\lambda_1+\gamma_2)^2+\delta_1\theta_2,\\
\qquad \quad\ \quad ((n_2-1)\lambda_1+\gamma_2)^2,\ldots,       ((n_1-1)\lambda_1+\gamma_2)^2\},\\[2mm]
\widetilde{D}_1=diag\{\nu_1^2+\delta_1\theta_2,(\lambda_1+\nu_1)^2+\delta_1\theta_2,(2\lambda_1+\nu_1)^2+\delta_1\theta_2,\ldots,\\ \qquad\ \quad\ \quad \ldots, ((n_2-1)\lambda_1+\nu_1)^2+\delta_1\theta_2, (a_1\lambda_1+a_2\gamma_2+\alpha_3 \nu_1)^2\}.
\end{array}$$

To continue iteration we conclude that in the main diagonal of the matrix $\mathbb{D}^k$ will be equal to zero if and only if $\lambda_1=\gamma_2=\nu_1=\delta_1\theta_2=0.$ Thus, the nilpotency of the matrix $\mathbb{D}$ implies $\lambda_1=\gamma_2=\nu_1=\delta_1\theta_2=0.$

Let us assume now that $\lambda_1=\gamma_2=\nu_1=\delta_1\theta_2=0$. Then we obtain that matrices $\widetilde{A}_1+\widetilde{A}_{2}, \ \widetilde{D}_1+\widetilde{D}_{2}, \widetilde{C}$ are strictly upper triangular and the matrix $\widetilde{B}$ is upper triangular. Therefore, the matrix $\mathbb{D}^2$ is nilpotent and hence, $\mathbb{D}$ is nilpotent.
\end{proof}

Let $R$ be a solvable Leibniz algebra whose nilpotent radical is the algebras from $L(\alpha_1, \alpha_2, \beta_1, \beta_2).$ We denote by $Q$ the complementary subspace to a nilpotent radical of $R$. Due to work \cite{Nulfilrad} we have that
dimension of $Q$ is bounded by number of nil-independent derivations of $L(\alpha_1, \alpha_2, \beta_1, \beta_2).$

Let us introduce denotations
$$d_1\in {\mathfrak Der} L(\alpha_1, \alpha_2, \beta_1, \beta_2) \ \mbox{with} \ \lambda_1\neq 0, \ \gamma_2=\nu_1=\delta_1\theta_2=0,$$
$$d_2\in {\mathfrak Der} L(\alpha_1, \alpha_2, \beta_1, \beta_2) \ \mbox{with} \ \gamma_2\neq 0, \ \lambda_1=\nu_1=\delta_1\theta_2=0,$$
$$d_3\in {\mathfrak Der} L(\alpha_1, \alpha_2, \beta_1, \beta_2) \ \mbox{with} \ \nu_1\neq 0, \ \lambda_1=\gamma_2=\delta_1\theta_2=0,$$
$$d_4 \in {\mathfrak Der}  L(\alpha_1, \alpha_2, \beta_1, \beta_2) \ \mbox{with} \ \delta_1\theta_2\neq 0, \ \lambda_1=\gamma_2=\nu_1=0.$$

\begin{prop} \label{cor1} $\dim Q\leq 3$.
\end{prop}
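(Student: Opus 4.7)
The plan is to combine the nil-independence bound from \cite{Nulfilrad} with a sharpening obtained by passing to the corresponding Lie algebra. Since every element of $Q$ acts on $L$ by right multiplication and these actions must be nil-independent derivations of the nilpotent radical, Lemma \ref{lem2} immediately gives the preliminary estimate $\dim Q \leq 4$. It remains to exclude the case $\dim Q = 4$.

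To sharpen the bound, I would project $R$ onto $\mathcal{G}_R = R/\mathcal{I}$. By Theorem \ref{thmsolv}, solvability of $R$ forces $R^2$ to be nilpotent, so the ideal $\mathcal{I}$ of squares is contained in $R^2 \subseteq L$; in particular $Q \cap \mathcal{I} = 0$ and the canonical projection embeds $Q$ as a subspace complementary to the nilpotent radical $L/\mathcal{I}$ inside $\mathcal{G}_R$. A direct check of the multiplication table shows that $\mathcal{I} = \operatorname{span}\{h\}$: the squares $[e_1,e_1]$, $[e_2,e_2]$, $[f_1,f_1]$ and the symmetric sums $[e_1,e_2]+[e_2,e_1]$, $[e_1,f_1]+[f_1,e_1]$ are all multiples of $h$. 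Consequently, $L/\mathcal{I}$ is isomorphic to the model nilpotent Lie algebra $\mathfrak{n}_c$ for characteristic sequence $(n_1,n_2,1)$, whose abelianization is three-dimensional, generated by the classes of $e_1$, $e_2$, $f_1$. The classical Mubarakzjanov bound \cite{Mub} for solvable Lie algebras with a given nilpotent radical then yields $\dim \overline{Q} \leq \dim(L/\mathcal{I})/(L/\mathcal{I})^2 = 3$, whence $\dim Q \leq 3$.

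The delicate point is the identification $L/\mathcal{I} \cong \mathfrak{n}_c$, which requires verifying that every parameter-dependent correction term of the form $\alpha_j h$ or $\beta_j h$ in the multiplication table of $L(\alpha_1,\alpha_2,\beta_1,\beta_2)$ disappears modulo $\mathcal{I}$. An alternative, more computational route, if one wants to stay internal to the Leibniz setting, is to assume for contradiction that there exist $x_1,x_2,x_3,x_4 \in Q$ whose right multiplications realize all four types $d_1,\ldots,d_4$ of Lemma \ref{lem2}, and then to derive a forced linear dependence by evaluating the Leibniz identity on triples $(x_i,x_j,e_k)$ together with the restrictions of Proposition \ref{propder}. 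This avoids appealing to an external Lie-algebraic bound but requires substantially more case analysis, so the quotient-to-Lie route above appears the most economical.
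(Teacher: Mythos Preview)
Your quotient-to-Lie strategy is different from the paper's direct computation, and as written it has a genuine gap.

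The claim ``a direct check of the multiplication table shows that $\mathcal{I}=\operatorname{span}\{h\}$'' is where the argument breaks. The multiplication table you are checking is that of the nilpotent radical $L=L(\alpha_1,\alpha_2,\beta_1,\beta_2)$, so what you have actually computed is $\mathcal{I}_L=\langle h\rangle$. But the ideal you need is $\mathcal{I}_R$, the span of all $[r,r]$ with $r\in R$, and this also contains the unknown elements $[x_i,x_i]$ and $[x_i,\ell]+[\ell,x_i]$. At this point in the argument the products $[x_i,x_j]$ and $[x_i,\ell]$ are \emph{not} determined --- that is precisely the structure you are trying to constrain --- so you cannot evaluate $\mathcal{I}_R$. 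All one knows a priori is $\langle h\rangle\subseteq\mathcal{I}_R\subseteq\operatorname{Ann}_r(R)\cap L\subseteq\operatorname{Ann}_r(L)=\operatorname{span}\{e_{n_1+1},f_{n_2},h\}$. Hence the identification $L/\mathcal{I}_R\cong\mathfrak{n}_c$ is unjustified.

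There is a second, related gap: even granting a description of $\mathcal{I}_R$, you silently assume that the nilpotent radical of $\mathcal{G}_R$ equals $L/\mathcal{I}_R$, i.e.\ that $\overline{Q}$ really is a complement to the nilradical. This amounts to showing that the four nil-independent derivations $d_1,\dots,d_4$ of $L$ remain nil-independent after passing to the quotient $L/\mathcal{I}_R$. That is not automatic; it requires going back into the matrix analysis of Lemma~\ref{lem2} and checking that the diagonal and the $\delta_1\theta_2$ contributions survive when one kills a subspace of $\operatorname{span}\{e_{n_1+1},f_{n_2},h\}$. Only then does the Mubarakzjanov bound $\dim\overline{Q}\le\dim\bigl((L/\mathcal{I}_R)/(L/\mathcal{I}_R)^2\bigr)=3$ apply.

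The paper sidesteps both problems by arguing inside $R$ directly: assuming $\dim Q=4$ with $\mathcal{R}_{x_i}|_L=d_i$, it uses only $R^2\subseteq L$ (Theorem~\ref{thmsolv}) and a single Leibniz-identity expansion $[f_1,[x_2,x_4]]=[[f_1,x_2],x_4]-[[f_1,x_4],x_2]$ to force a contradiction modulo $L^2$. This never requires knowing $\mathcal{I}_R$ or the left products $[x_i,\ell]$. Your ``alternative, more computational route'' is in fact much closer to what the paper does, and is the safer path here.
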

\begin{proof} Due to Lemma \ref{lem2} we have that the number of nil-independent derivations of $L(\alpha_1, \alpha_2, \beta_1, \beta_2)$ is equal to 4 and they are depends on parameters $\lambda_1, \gamma_2, \nu_1, \theta_2, \delta_1$. Let us assume that $\dim Q=4,$ that is, $Q=\{x_1, x_2, x_3, x_4\}$. Then
$${\mathcal{R}_{x_i}}_{|L(\alpha_1, \alpha_2, \beta_1, \beta_2)}=d_i, \ i=1, \ldots, 4.$$

By scaling of the basis elements $x_i, \ 1\leq i \leq 4$ one can assume that $\lambda_1=1$ in $d_1,$ $\gamma_2=1$ in $d_2$, $\nu_1=1$ in $d_3$, respectively.

Let us assume that $\theta_2\neq 0$ (recall that this case is impossible when $n_1>n_2$).
Thanks to Theorem \ref{thmsolv} we have $R^2\subseteq L(\alpha_1, \alpha_2, \beta_1, \beta_2).$
Applying this embedding in the following equalities:
$$(*)f_2=[f_1,[x_2,x_4]]= [[f_1,x_2],x_4]-[[f_1,x_4],x_2]=-e_2+L(\alpha_i, \beta_j)^2$$
we get a contradiction with the assumption that $\theta_2\neq 0$. Thus, we obtain $\dim Q\leq 3.$
\end{proof}

The following theorem describes solvable Leibniz algebras with nilpotent radical $L(\alpha_1, \alpha_2, \beta_1, \beta_2)$
and maximal possible dimension of $Q$.

\begin{thm} \label{thmDescription}Solvable Leibniz algebra with nilpotent radical $L(\alpha_1, \alpha_2, \beta_1, \beta_2)$ and three-dimensional complementary subspace is isomorphic to the algebra:
$$R:\left\{ \begin{array}{llll}
[e_{1},e_1]=h,&[e_i,e_1]=-[e_1,e_i]=e_{i+1}, &2 \leq i \leq n_1,\\[1mm]
[h,x_1]=2h,&[f_{i},e_1]=-[e_1,f_{i}]=f_{i+1},& 1\leq i\leq n_2-1,\\[1mm]
[e_{1},x_1]=-[x_1,e_1]=e_1,& [e_i,x_1]=-[x_1,e_i]=(i-2)e_i, & 3\leq i\leq n_1+1,&\\[1mm]
[f_i,x_1]=-[x_1,f_i]=(i-1)f_i, & 2\leq i\leq n_2,\\[1mm]
[e_i,x_2]=-[x_2,e_i]=e_i, & 2\leq i\leq n_1+1,& &\\[1mm]
[f_i,x_3]=-[x_3,f_i]=f_i, & 1\leq i\leq n_2.& &\\[1mm]
\end{array}\right.$$
\end{thm}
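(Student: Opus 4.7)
The plan is to construct $R = L \oplus Q$ in four stages: first choose $Q$ so that its right-multiplication operators on $L$ realize the nil-independent derivations classified by Lemma~\ref{lem2}; then normalize the free parameters $\alpha_i, \beta_j$ of the nilpotent radical; then determine the left brackets $[Q, L]$; and finally the internal brackets $[Q, Q]$.

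By Proposition~\ref{cor1} we have $\dim Q = 3$, and Lemma~\ref{lem2} classifies four nil-independent derivations $d_1, d_2, d_3, d_4$. Using the Mubarakzjanov-type construction of \cite{Nulfilrad}, we pick a basis $\{x_1, x_2, x_3\}$ of $Q$ such that the restrictions $\mathcal{R}_{x_i}|_L$ are nil-independent. A variant of the Leibniz-identity argument from the proof of Proposition~\ref{cor1}, applied to triples $(f_1, x_i, x_j)$ whenever one of the $\mathcal{R}_{x_i}|_L$ has $\delta_1\theta_2 \neq 0$, rules out any selection involving $d_4$; so after a linear change in $Q$ we have $\mathcal{R}_{x_i}|_L = d_i$ for $i = 1, 2, 3$, and rescaling gives $\lambda_1 = \gamma_2 = \nu_1 = 1$. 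The matrix $\mathbb{D}$ of Proposition~\ref{propder} then reads off all brackets $[e_k, x_i]$ and $[f_k, x_i]$.

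The normalization of the nilpotent radical is the main obstacle I anticipate. A basis change $e_2 \mapsto e_2 - \tfrac{\beta_1}{2} e_1$ and $f_1 \mapsto f_1 - \tfrac{\beta_2}{2} e_1$, propagated through the recurrences $[e_i, e_1] = e_{i+1}$ and $[f_i, e_1] = f_{i+1}$, kills $\beta_1$ and $\beta_2$. Once $\beta_1 = \beta_2 = 0$, substituting $d_1$ (with $\lambda_1 = 1$ and all other diagonal parameters zero) into the last two constraints of Proposition~\ref{propder} yields $2\alpha_1 = 2\alpha_2 = 0$, so $\alpha_1 = \alpha_2 = 0$. One has to check that the derivations $d_1, d_2, d_3$ remain in canonical form after these changes (with off-diagonal parameters possibly adjusted and the $x_i$ shifted by suitable elements of $L$), which requires careful bookkeeping with the matrix $\mathbb{D}$.

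With the nilpotent radical reduced to its canonical form, the left brackets $[x_i, e_k]$ and $[x_i, f_k]$ are determined modulo $\mathrm{Ann}_r(L)$ by the identity $[x_i, y] + [y, x_i] \in \mathrm{Ann}_r(L)$; residual $h$-components are absorbed by shifts $x_i \mapsto x_i + c_i h$, which do not affect $\mathcal{R}_{x_i}|_L$. Since Theorem~\ref{thmsolv} implies $R^2 \subseteq L$, each $[x_i, x_j]$ lies in $L$, and applying the Leibniz identity to triples $(e_1, x_i, x_j)$ together with further shifts $x_i \mapsto x_i + z_i$ (with $z_i \in L$) reduces these brackets to zero. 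The relation $[h, x_1] = 2h$ follows from the derivation property of $\mathcal{R}_{x_1}$:
$$[h, x_1] = \bigl[[e_1, e_1], x_1\bigr] = [[e_1, x_1], e_1] + [e_1, [e_1, x_1]] = 2[e_1, e_1] = 2h,$$
while $[e_1, x_2] = d_2(e_1) = 0$ and $[e_1, x_3] = d_3(e_1) = 0$ give $[h, x_2] = [h, x_3] = 0$. Assembling all these data yields the multiplication table of $R$.
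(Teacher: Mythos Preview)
Your route diverges from the paper's in its organizing idea. The paper does not normalize the nilradical first; instead it establishes that $\mathrm{Ann}_r(R)=\langle h\rangle$ (by computing products such as $[x_1,e_{n_1+1}]=[x_1,[e_{n_1},e_1]]$ to check that $e_{n_1+1}$ and $f_{n_2}$ are \emph{not} right-annihilated in $R$), then passes to the Lie quotient $R/\langle h\rangle\cong\mathfrak{r}_c$, whose table is already known from Theorem~\ref{thmLie}. Lifting back with undetermined $h$-corrections, the parameters $\alpha_i,\beta_j$ are killed by the Leibniz identity in $R$: the triple $\{e_1,e_1,x_1\}$ gives $[h,x_1]=2h$, and then $\{e_2,e_2,x_1\}$, $\{f_1,f_1,x_1\}$, $\{e_1,e_2,x_1\}$, $\{e_1,f_1,x_1\}$ force $\alpha_1=\alpha_2=\beta_1=\beta_2=0$ directly.

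Your proposal has two concrete gaps. First, when you say $[x_i,e_k]$ is determined ``modulo $\mathrm{Ann}_r(L)$'', the relevant object is $\mathrm{Ann}_r(R)$; in the nilradical alone one has $e_{n_1+1},f_{n_2},h\in\mathrm{Ann}_r(L)$, which is too coarse, and cutting $\mathrm{Ann}_r(R)$ down to $\langle h\rangle$ is precisely the nontrivial step you are skipping. Second, the shift $x_i\mapsto x_i+c_ih$ cannot absorb $h$-components of $[e_k,x_i]$ or $[x_i,e_k]$, since $[h,e_k]=[e_k,h]=0$; that shift only moves $[x_i,x_1]$ via $[h,x_1]=2h$. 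The paper's absorptions instead replace $e_1,e_2,f_1$ by $e_1-a_1h$, $e_2-\tfrac{b_1}{2}h$, $f_1-\tfrac{d_1}{2}h$ (so that, e.g., $[e_1',x_1]=e_1+a_1h-2a_1h=e_1'$). Your derivation-constraint argument for $\alpha_1=\alpha_2=0$ is salvageable---if $\alpha_1\neq 0$ with $\beta_i=0$ then every derivation has $\lambda_1=\gamma_2$, collapsing the nil-independent count below $3$---but as phrased it assumes that a $d_1$ with $\lambda_1\neq 0$, $\gamma_2=\nu_1=0$ persists after the basis change, which is exactly what is at issue.
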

\begin{proof} Let $R=L(\alpha_1, \alpha_2, \beta_1, \beta_2)\oplus Q$ with $\{e_1,e_2,\ldots , e_{n_1+1}, f_1\ldots, f_{n_2}, x_1, x_2, x_3\}$ such that
$${\mathcal{R}_{x_i}}_{|L(\alpha_1, \alpha_2, \beta_1, \beta_2)}=d_i, \ i=1,2,3.$$
Due to Proposition \ref{propder} we have the products $[L(\alpha_1, \alpha_2, \beta_1, \beta_2),x_j], 1\leq j \leq 3.$

From the table of multiplications of the algebra $L(\alpha_1, \alpha_2, \beta_1, \beta_2)$ we derive that
$$e_i\not \in {\rm Ann}_r(R) \ \ \mbox{with} \ \ 1\leq i\leq n_1, \quad f_j\not \in {\rm Ann}_r(R) \ \ \mbox{with} \ 1\leq j\leq n_2-1.$$ Taking into account that for any $x,y\in R$ we have $[x,y]+[y,x]\in Ann_r(R)$ we conclude that
$$\begin{array}{ll}
[x_1,e_{i}] +[e_{i},x_1]=(*)e_{n_1+1}+(*)f_{n_2}+(*)h, & 1\leq i \leq n_1,\\[1mm]
[x_1,f_{i}]+[f_{i},x_1]=(*)e_{n_1+1}+(*)f_{n_2}+(*)h, & 1\leq i \leq n_2-1.
\end{array}$$

Consider
$$\begin{array}{lll}
[x_1,e_{n_1+1}]&=&[x_1,[e_{n_1},e_1]]=[[x_1,e_{n_1}],e_1]-[[x_1,e_1],e_{n_1}]=\\[1mm]
&=&[-[e_{n_1},x_1],e_1]-[-[e_1,x_1],e_{n_1}]=-(n_1-1)e_{n_1+1},\\[1mm]
[x_1,f_{n_2}]&=&[x_1,[f_{n_2-1},e_1]]=[[x_1,f_{n_2-1}],e_1]-[[x_1,e_1],f_{n_2-1}]=\\[1mm]
&=&[-[f_{n_2-1},x_1],e_1]-[-[e_1,x_1],f_{n_2-1}]=\\[1mm]
&-&\displaystyle\sum_{k=n_2+2}^{n_1+1} \theta_{k-n_2+1,1} e_k-(n_2-1)f_{n_2}.
\end{array}$$

 This imply that $e_{n_1+1}, f_{n_2}\notin {\rm Ann}_r(R)$

We claim that $span \langle e_i, f_j \ | 1\leq i \leq n_1+1, \ 1\leq j \leq n_2 \rangle \cap {\rm Ann}_r(R)=\{0\}$.
Indeed, let
$$z=a_1e_1+a_2e_2+\ldots+a_{n_1+1}e_{n_1+1}+b_1f_1+\ldots+b_{n_2}f_{n_2}+c_1x_1+c_2x_2+c_3x_3\in {\rm Ann}_r(R).$$ Then considering the products
$$0=[x_1,z]=[x_2,z]=[x_3,z]=[e_1,z]=[e_2,z]=[f_1,z]$$
we derive $z=0.$

Thus, we obtain ${\rm Ann}_r(R)= \langle h \rangle$ and
$$\begin{array}{lll}
[x_j,e_i]=-[e_i,x_j]+(*)h, & 1\leq i \leq n_1+1, &1\leq j \leq 3,\\[1mm]
[x_j,f_i]=-[f_i,x_j]+(*)h, & 1\leq i \leq n_2,& 1\leq j \leq 3,\\[1mm]
[x_i,x_j]=-[x_j,x_i]+(*)h, & 1\leq i,j\leq 3.&
\end{array}$$

It is easy to see that the quotient algebra $R/{\rm Ann}_r(R)$ is a particular case of the Lie algebra $\mathfrak{r}_c.$
Namely, the quotient Lie algebra has nilpotent radical $\mathfrak{n}_c$ with characteristic sequence $(n_1, n_2, 1)$ and its table of multiplication has the following form:
$$\left\{\begin{array}{llll}
[e_i,e_1]=-[e_1,e_i]=e_{i+1}, &2 \leq i \leq n_1, &\\[1mm]
[f_{i},e_1]=-[e_1,f_{i}]=f_{i+1},& 1\leq i\leq n_2-1,&\\[1mm]
[e_{1},x_1]=-[x_1,e_1]=e_1,& \\[1mm]
[e_i,x_1]=-[x_1,e_i]=(i-2)e_i, & 3\leq i\leq n_1+1,&\\[1mm]
[f_i,x_1]=-[x_1,f_i]=(i-1)f_i, & 2\leq i\leq n_2,&\\[1mm]
[e_i,x_2]=-[x_2,e_i]=e_i, & 2\leq i\leq n_1+1,&\\[1mm]
[f_i,x_3]=-[x_3,f_i]=f_i, &\  1\leq i\leq n_2.&&\\[1mm]
\end{array}\right.$$

If we now rise up to the initial algebra $R$, then we get the following table of multiplications (we omit the bracket of the family $L(\alpha_1, \alpha_2, \beta_1, \beta_2)$):
$$\begin{array}{ll}
[e_1,x_j]=\delta_{1j}e_1+ a_j h,&[x_j,e_1]=-\delta_{1j}e_1+\widetilde{a}_j h,\ 1\leq j\leq 3\\[1mm]
[e_2,x_j]=\delta_{2j}e_2+b_j h,&[x_j,e_2]=-\delta_{2j}e_2+\widetilde{b}_j h,\ 1\leq j\leq 3 \\[1mm]
[e_3,x_j]=(1-\delta_{3,j})e_3+ c_j  h ,& 1\leq j\leq 3\\[1mm]
[e_i,x_1]=(i-2)e_i,& 4\leq i\leq n_1+1,\\[1mm]
[e_i,x_2]=e_i,& 4\leq i\leq n_1+1,\\[1mm]

[f_1,x_j]=\lambda f_1+ d_j h,& [x_j,f_1]=-\lambda f_1+ \widetilde{d}_1 h,\\[1mm]
[f_2,x_j]=(1-\delta_{2j}) f_2+ g_{j} h,& \\[1mm]
[f_i,x_1]=(i-1)f_i,& 3\leq i\leq n_2,\\[1mm]
[f_i,x_3]=f_i,&  3\leq i\leq n_2,\\[1mm]

[h,x_j]=m_j h, &   1\leq j\leq 3\\[1mm]
[x_i,x_j]=\varphi_{i,j}h,& 1\leq i,j\leq 3.
\end{array}$$
with $\delta_{ij}$ the Kronecker symbol, $\lambda=0$ if $ j=1,2$ and $\lambda =1$ if $j=3.$

The Leibniz identity on the following triples imposes further constraints on the above family.
\begin{center}
	\begin{tabular}{ll}
	\begin{tabular}{lll}
	Leibniz identity& &Constraint\\
		\hline\hline
		$\{e_1,e_1,x_1\},$ &\quad $\Rightarrow $\quad & $m_1=2,$\\
		$\{e_1,e_1,x_i\},\ 2\leq i\leq 3$ &\quad $\Rightarrow $\quad & $m_i=0,$\\
		$\{e_2,e_2,x_1\},$ &\quad $\Rightarrow $\quad & $\alpha_1=0,$\\
		$\{f_1,f_1,x_1\},$ &\quad $\Rightarrow $\quad & $\alpha_2=0,$
	\end{tabular}&	\begin{tabular}{lll}
	Leibniz identity& &Constraint\\
	\hline\hline
	$\{e_1,e_2,x_1\},$ &\quad $\Rightarrow $\quad & $\beta_1=0,$\\
	$\{e_1,f_1,x_1\},$ &\quad $\Rightarrow $\quad & $\beta_2=0,$\\
	$\{e_2,e_1,x_i\},\ 1\leq i\leq 3$ &\quad $\Rightarrow $\quad & $c_i=0,$\\
	$\{f_1,e_1,x_i\},\ 1\leq i\leq 3$ &\quad $\Rightarrow $\quad & $g_i=0.$

\end{tabular}
\end{tabular}
\end{center}
At that time, the following change of basis
$$e_1'=e_1-a_1h,\quad e_2'=e_2-\frac{b_1}{2}h,\quad f_1'=f_1-\frac{d_1}{2}h,\quad x_i'=x_i-\frac{\varphi_{i,1}}{2}h,\ 1\leq i\leq 3$$
allows to assume that
 $a_1=b_1=d_1=\varphi_{i,1}=0$ for $ 1\leq i\leq 3.$

We again apply the Leibniz identity and we have the following:
\begin{center}
	\begin{tabular}{lll}
		Leibniz identity& &Constraint\\
		\hline\hline
		$\{x_i,e_1,x_1\},\ 1\leq i\leq 3$ &\quad $\Rightarrow $\quad & $\widetilde{a}_i=0,\ 1\leq i\leq 3,$\\
		$\{x_i,e_2,x_1\},\ 1\leq i\leq 3$ &\quad $\Rightarrow $\quad & $\widetilde{b}_i=0,\ 1\leq i\leq 3,$\\
			$\{f_1,x_i,x_1\},\ 2\leq i\leq 3$ &\quad $\Rightarrow $\quad & $d_i=0,\ 2\leq i\leq 3,$\\
		$\{x_i,f_1,x_1\},\ 2\leq i\leq 3$ &\quad $\Rightarrow $\quad & $\widetilde{d}_i=0,\ 1\leq i\leq 3,$\\
		$\{e_1,x_i,x_1\},\ 2\leq i\leq 3$ &\quad $\Rightarrow $\quad & $a_i=0,\ 2\leq i\leq 3,$\\
		$\{e_2,x_i,x_1\},\ 2\leq i\leq 3$ &\quad $\Rightarrow $\quad & $b_i=0,\ 2\leq i\leq 3,$\\
		$\{x_i,x_j,x_1\},\ 1\leq i\leq 3,\ 2\leq j\leq 3$ &\quad $\Rightarrow $\quad & $\varphi_{i,j}=0,\ 1\leq i\leq 3,\ 2\leq j\leq 3.$
	\end{tabular}
\end{center}

Finally, if  we consider the equalities $[x_j,e_i]=[x_j,[e_{i-1},e_1]]$  with $3\leq i\leq n_1+1 $ and $[x_j,f_i]=[x_j,[f_{i-1},e_1]]$ with $2\leq i\leq n_2,\ 1\leq j\leq 3 $ we obtain the algebra of the theorem statement.

\end{proof}

The next result establish the completeness of the algebra $R$.

\begin{thm} \label{thmcomplete} The solvable Leibniz algebra $R$ is complete.
\end{thm}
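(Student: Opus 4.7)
The plan is to verify both defining conditions of completeness: ${\rm Center}(R)=0$ and every derivation of $R$ is inner (i.e.\ ${\rm HL}^1(R,R)=0$).

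For the centerlessness, pick an arbitrary $z=\sum_{i=1}^{n_1+1}a_ie_i+\sum_{j=1}^{n_2}b_jf_j+ch+d_1x_1+d_2x_2+d_3x_3$ and impose successively $[z,y]=0$ for $y=x_1,x_2,x_3,e_1,e_2,f_1$. The bracket $[z,x_1]$ is diagonal in the basis, with $\mathcal{R}_{x_1}$ acting by the eigenvalues $1,0,i-2,0,j-1,2$ on $e_1,e_2,e_i\ (i\ge 3),f_1,f_j\ (j\ge 2),h$ and by $0$ on the $x_k$; so $[z,x_1]=0$ already forces $a_1=c=0$, $a_i=0$ for $i\ge 3$ and $b_j=0$ for $j\ge 2$. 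Then $[z,x_2]=a_2e_2=0$ and $[z,x_3]=b_1f_1=0$ kill $a_2$ and $b_1$, while $[z,e_1]$, $[z,e_2]$ and $[z,f_1]$ contain the terms $-d_1e_1,-d_2e_2,-d_3f_1$ respectively and kill $d_1,d_2,d_3$. Hence $z=0$.

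Now let $d\in{\mathfrak Der}(R)$. Since the nilpotent radical $N$ is characteristic in $R$, $d|_N$ is a derivation of $N$ and is described by Proposition \ref{propder} (with $\alpha_1=\alpha_2=\beta_1=\beta_2=0$). Among its parameters, only $\lambda_1,\gamma_2,\nu_1$ prevent $d|_N$ from being nilpotent; these are precisely the parameters of $d_1,d_2,d_3$ of Section \ref{subsection31}, which coincide with $\mathcal{R}_{x_1}|_N,\mathcal{R}_{x_2}|_N,\mathcal{R}_{x_3}|_N$. Subtracting an appropriate combination of $\mathcal{R}_{x_1},\mathcal{R}_{x_2},\mathcal{R}_{x_3}$ reduces us to the case where $d|_N$ is nilpotent. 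The remaining parameters of Proposition \ref{propder} are each produced by a right multiplication by an element of $N$ — for instance $\mathcal{R}_{e_1}|_N$ supplies $\gamma_3=c_1=1$, while $\mathcal{R}_{e_i},\mathcal{R}_{f_i},\mathcal{R}_h$ supply the other $\lambda_j,\gamma_j,\mu_j,\nu_j,\delta_j,\theta_j,c_2,c_3$ — so a linear-algebra argument produces a single $y\in N$ with $\mathcal{R}_y|_N=d|_N$. After subtracting $\mathcal{R}_y$, we may assume $d|_N=0$.

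It remains to show $d(x_j)=0$ for $j=1,2,3$ up to one more inner subtraction. Write $d(x_j)=\sum\alpha^{(j)}_ie_i+\sum\beta^{(j)}_if_i+\gamma^{(j)}h+\sum_k\delta^{(j)}_kx_k$. The derivation identity applied to $[e_1,x_j]$, $[e_2,x_j]$ and $[f_1,x_j]$ — whose left sides are known because $d(e_1)=d(e_2)=d(f_1)=0$ — forces $\delta^{(j)}_k=0$ and kills every $e_i,f_i$ coefficient except the top-weight entries $\alpha^{(j)}_{n_1+1}e_{n_1+1}$ and $\beta^{(j)}_{n_2}f_{n_2}$. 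Applying the identity to the zero brackets $[x_i,x_j]=0$ for $i,j\in\{1,2,3\}$ then eliminates $\gamma^{(j)}$ (from the $2\gamma^{(1)}h$ produced by $[h,x_1]=2h$) and enforces the relations $\alpha^{(1)}_{n_1+1}=(n_1-1)\alpha^{(2)}_{n_1+1}$ and $\beta^{(1)}_{n_2}=(n_2-1)\beta^{(3)}_{n_2}$. These are precisely the relations satisfied by the inner derivation $\mathcal{R}_{\alpha e_{n_1+1}+\beta f_{n_2}}$, which is zero on $N$ and sends $x_1,x_2,x_3$ to $-\alpha(n_1-1)e_{n_1+1}-\beta(n_2-1)f_{n_2}$, $-\alpha e_{n_1+1}$, $-\beta f_{n_2}$; choosing $\alpha=-\alpha^{(2)}_{n_1+1},\beta=-\beta^{(3)}_{n_2}$ annihilates all three $d(x_j)$ simultaneously. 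The step I expect to be most delicate is realizing the nilpotent part of $d|_N$ as a single right multiplication $\mathcal{R}_y$ with $y\in N$: it amounts to solving a linear system dictated by Proposition \ref{propder} and the table of $N$, matching every parameter simultaneously. Once that is dispatched, the analysis of $d(x_j)$ is a mechanical propagation through the Leibniz identity.
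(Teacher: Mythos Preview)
Your centerlessness argument and your final step (analyzing $d(x_j)$ once $d|_N=0$) are both sound, but the step you yourself flag as ``most delicate'' is actually wrong as written, not merely delicate. You assert that the parameters of Proposition~\ref{propder} other than $\lambda_1,\gamma_2,\nu_1$ are ``each produced by a right multiplication by an element of $N$''. This is false: take for instance $\delta_1$, which (in the row-input convention used in the paper) records an $f_1$-component in $d(e_2)$. No right multiplication $\mathcal{R}_y$ with $y\in R$ can produce this, since $[e_2,-]$ lands only in $\langle e_2,e_3\rangle$. The same obstruction applies to $\theta_2$, to $\lambda_2$ and $\mu_1$ (there is no $y$ with $[e_1,y]\in\langle e_2\rangle$ or $\langle f_1\rangle$), and to $c_2,c_3$ (nothing maps $e_2$ or $f_1$ to $h$). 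So the linear system you plan to solve is genuinely inconsistent for a general derivation of $N$. The repair is to show first that these ``bad'' parameters vanish for any $d\in{\mathfrak Der}(R)$---for example, $0=d([e_2,x_3])=[d(e_2),x_3]+[e_2,d(x_3)]$ forces the $f_1$-component of $d(e_2)$ to be zero, since $[f_1,x_3]=f_1$ and $[e_2,-]$ never hits $f_1$. You would have to run this check for each bad parameter, which is exactly the kind of case analysis your approach was meant to avoid.

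The paper sidesteps all of this by working modulo the ideal $J=\langle h\rangle$. Since $R/J\cong\mathfrak{r}_c$, Theorem~\ref{thmAncochea} gives completeness of the quotient, so every derivation of $R$ is inner modulo $J$; one then only has to track a single $h$-coefficient on each basis element, and a handful of identities (essentially $d(e_1)\equiv\alpha h$, $d(h)=2\beta h$, the rest $\equiv 0$) finish the job. This replaces your parameter-matching on ${\mathfrak Der}(N)$ by the already-established fact ${\rm H}^1(\mathfrak{r}_c,\mathfrak{r}_c)=0$, and the whole argument fits in a few lines.
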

\begin{proof} Centerless of the algebra $R$ is immediately follows from the table of multiplications in Theorem \ref{thmDescription}. Note that $\langle h \rangle$ forms an ideal of $R$.

The quotient algebra $R/ \langle h \rangle$ is the algebra ${\mathfrak{r}_c}$, which is complete due to Theorem \ref{thmAncochea}. Applying this result in the following equalities by modulo of an ideal $\langle h \rangle$:
$$\begin{array}{rll}
d(e_1)&=&d([e_1,x_1])=[d(e_1),x_1]+[e_1,d(x_1)] \not\equiv  0 \Rightarrow d(e_1)\equiv \mathcal{R}_{\alpha e_1}(e_1)=\alpha h, \ \alpha\in \mathbb{C},\\
0&=&d([e_2,x_1])=[d(e_2),x_1]+[e_2,d(x_1)]\equiv [d(e_2),x_1]\Rightarrow d(e_2) \equiv 0,\\
d(e_{i+1})&=&d([e_i,e_1])=[d(e_i),e_1]+[e_i,d(e_1)]\equiv 0 \Rightarrow d(e_{i+1}) \equiv 0, \ 2\leq i \leq n_1,\\
0&=&d([f_1,x_1])=[d(f_1),x_1]+[f_1,d(x_1)]\equiv [d(f_1),x_1] \Rightarrow d(f_1) \equiv 0,\\
d(f_{i+1})&=&d([f_i,e_1])=[d(f_i),e_1]+[f_i,d(e_1)]\equiv 0 \Rightarrow d(f_{i+1}) \equiv 0, \ 1\leq i \leq n_2-1,\\
0&=&d([x_i,x_1])=[d(x_i),x_1]+[x_i,d(x_1)]\equiv [d(x_i),x_1] \Rightarrow d(x_i) \equiv 0, 1\leq i \leq 3.
\end{array}$$
and in the chain of equalities
$$\begin{array}{ll}
2[d(e_1),e_1]+2[e_1,d(e_1)]&=2d([e_1,e_1])=2d(h)=d([h,x_1]) =[d(h),x_1]+[h,d(x_1)] \\
&\Rightarrow d(h) = \mathcal{R}_{\beta x_1}(h)=2\beta h, \ \beta\in \mathbb{C}
\end{array}$$
we conclude that any derivation of $R$ is inner.

\end{proof}


Now we prove the triviality of the second group of cohomology for the algebra $R$ with coefficient itself (that is ${\rm HL}^2(R,R)=0$). Since $J:=\langle h \rangle $ is an ideal of $R$ and quotient algebra $R/J$ is the Lie algebra $\mathfrak{r}_c$, we get a decomposition $R=\mathfrak{r}_c\oplus J$ as the direct sum of the vector spaces (here we identify the space of the quotient space ${\mathfrak{r}_c}$ and its preimage under the natural homomorphism). Hence, for any $x,y\in R$ and $\varphi(x,y)\in {\rm ZL}^2(R,R)$ one has
$$[x,y]=[x,y]_{\mathfrak{r}_c}+[x,y]_J, \quad \varphi(x,y)=\varphi(x,y)_{\mathfrak{r}_c}+\varphi(x,y)_J,$$
with $[x,y]_{\mathfrak{r}_c}\in \mathfrak{r}_c, \  [x,y]_J\in J$ and $\varphi(x,y)_{\mathfrak{r}_c}\in {\mathfrak{r}_c}, \ \varphi(x,y)_J\in J$.

For an arbitrary elements $x,y,z \in \mathfrak{r}_c$ and $\varphi \in {\rm ZL}^2(R,R)$ using (\ref{eq4}) we consider the chain of equalities:
$$\begin{array}{ll}
0&=[x,\varphi(y,z)]-[\varphi(x,y),z]+[\varphi(x,z),y]+\varphi(x,[y,z])-\varphi([x,y],z)+
\varphi([x,z],y)=\\{}
%
%
%
&=[x,\varphi(y,z)_{\mathfrak{r}_c}]_{\mathfrak{r}_c}-[\varphi(x,y)_{\mathfrak{r}_c},z]_{\mathfrak{r}_c}
+[\varphi(x,z)_{\mathfrak{r}_c},y]_{\mathfrak{r}_c}+\varphi(x,[y,z]_{\mathfrak{r}_c})_{\mathfrak{r}_c}
-\varphi([x,y]_{\mathfrak{r}_c},z)_{\mathfrak{r}_c}+\\{}
&+\varphi([x,z]_{\mathfrak{r}_c},y)_{\mathfrak{r}_c}+[x,\varphi(y,z)_{\mathfrak{r}_c}]_J+[x,\varphi(y,z)_J]_J-
[\varphi(x,y)_{\mathfrak{r}_c},z]_J-[\varphi(x,y)_J,z]_J+\\{}
&+[\varphi(x,z)_{\mathfrak{r}_c},y]_J+[\varphi(x,z)_J,y]_J+
\varphi(x,[y,z]_{\mathfrak{r}_c})_J+\varphi(x,[y,z]_J)_{\mathfrak{r}_c}+\varphi(x,[y,z]_J)_J-\\{}
&-\varphi([x,y]_{\mathfrak{r}_c},z)_J-\varphi([x,y]_J,z)_{\mathfrak{r}_c}-\varphi([x,y]_J,z)_J+
\varphi([x,z]_{\mathfrak{r}_c},y)_J+\varphi([x,z]_J,y)_{\mathfrak{r}_c}+\\{}
&+\varphi([x,z]_J,y)_J.
\end{array}$$

From this we obtain
\begin{equation}\label{eq111}\left\{
\begin{array}{lll}
[x,\varphi(y,z)_{\mathfrak{r}_c}]_{\mathfrak{r}_c}-[\varphi(x,y)_{\mathfrak{r}_c},z]_{\mathfrak{r}_c}+
[\varphi(x,z)_{\mathfrak{r}_c},y]_{\mathfrak{r}_c}+&\\[1mm]
\varphi(x,[y,z]_{\mathfrak{r}_c})_{\mathfrak{r}_c}-\varphi([x,y]_{\mathfrak{r}_c},z)_{\mathfrak{r}_c}+
\varphi([x,z]_{\mathfrak{r}_c},y)_{\mathfrak{r}_c}+&\\[1mm]
\varphi(x,[y,z]_J)_{\mathfrak{r}_c}-\varphi([x,y]_J,z)_{\mathfrak{r}_c}
+\varphi([x,z]_J,y)_{\mathfrak{r}_c}=0,&\\[1mm]
\end{array}\right.
\end{equation}

\begin{equation}\label{eq222}\left\{
\begin{array}{lll}
[\varphi(x,z)_{\mathfrak{r}_c},y]_J+[x,\varphi(y,z)_J]_J+[x,\varphi(y,z)_{\mathfrak{r}_c}]_J+
[\varphi(x,z)_J,y]_J+&\\[1mm]
\varphi(x,[y,z]_{\mathfrak{r}_c})_J-[\varphi(x,y)_{\mathfrak{r}_c},z]_J-
[\varphi(x,y)_J,z]_J+\varphi(x,[y,z]_J)_J-&\\[1mm]
\varphi([x,y]_{\mathfrak{r}_c},z)_J-\varphi([x,y]_J,z)_J+
\varphi([x,z]_{\mathfrak{r}_c},y)_J+\varphi([x,z]_J,y)_J=0.
\end{array}\right.
\end{equation}

Note that the first six terms of the equality (\ref{eq111}) define a Leibniz $2$-cocycle for the quotient Lie algebra $\mathfrak{r}_c$. Therefore, Leibniz $2$-cocycles of the Lie algebra $\mathfrak{r}_c$ with its trivial extensions on domains $J \otimes R, R\otimes J, J\otimes J$ are included into ${\rm ZL}^2(R,R)$ (the same is true for $2$-coboundaries of the algebra $\mathfrak{r}_c$). Moreover, the last three terms in (\ref{eq111}) appear only for the triples $\{e_1,e_{1},a\}, \ \{e_{1},a,e_1\}, \ \{a,e_{1},e_1\}$ with $a\in \mathfrak{r}_c$.

\begin{prop} \label{prop111} The following $2$-cochains together with a basis of ${\rm ZL}^2(\mathfrak{r}_c,\mathfrak{r}_c)$:
$$\varphi_1(e_1,e_1)=h, \quad \varphi_2(x_1,e_1)=\varphi_2(e_1,x_1)=h, \quad \varphi_{3}(x_1,x_1)=h, \quad \varphi_{4}(x_3,x_1)=h,$$

$$\begin{array}{ll}\left\{  \varphi_{5}(f_1,x_1)=-2h,\right. & \varphi_{5}(f_1,x_3)=-\varphi_{11}(x_3,f_1)=h,\\[4mm]
\end{array}$$
$$
\begin{array}{ll}\left\{
\begin{array}{lll}
\varphi_{6}(x_2,e_2)=-\varphi_{6}(e_2,x_2)=h,&\\
\varphi_{6}(e_2,x_1)=h,&\\[1mm]
\end{array}\right.&
\left\{
\begin{array}{lll}
\varphi_{7}(f_1,x_3)=-\varphi_{7}(x_3,f_1)=h,&\\
\varphi_{7}(f_1,x_1)=(-2)h,& \\[1mm]
\end{array}\right.\\[5mm]
\left\{
\begin{array}{lll}
\varphi_8(e_1,e_1)=\frac{1}{2}x_3,&\\
\varphi_8(h,x_1)=x_3,&\\
\varphi_8(h,f_i)=-\varphi_8(f_i,h)=\frac{1}{2}f_i,&\\
1\leq i\leq n_2,&
\end{array}\right.&
\left\{
\begin{array}{lll}
\varphi_{9}(e_1,e_1)=\frac{1}{2}x_2,&\\
\varphi_{9}(h,x_1)=x_2,&\\
\varphi_{9}(h,e_i)=-\varphi_{9}(e_i,h)=\frac{1}{2}e_i,&\\
2\leq i\leq n_1+1,&\\[1mm]
\end{array}\right.\\[5mm]
\left\{
\begin{array}{lll}
\varphi_{10}(h,h)=-h,&\\
\varphi_{10}(h,x_1)=x_1,&\\
\varphi_{10}(e_1,e_1)=\frac{1}{2}x_1,&\\
\varphi_{10}(h,e_1)=-\varphi_{10}(e_1,h)=\frac{1}{2} e_1,&\\
\varphi_{10}(h,e_i)=-\varphi_{10}(e_i,h)=\frac{i-2}{2}e_i,&\\
\varphi_{10}(h,f_j)=-\varphi_{10}(f_j,h)=\frac{i-1}{2}f_j,&\\
3\leq i\leq n_1+1, \ 2\leq j\leq n_2,\\[1mm]
\end{array}\right.&
\left\{
\begin{array}{lll}
\varphi_{11}(e_1,e_1)=e_1,&\\
\varphi_{11}(e_1,h)=-\varphi_{11}(h,e_1)=h,&\\
\varphi_{11}(h,x_1)=\varphi_{11}(x_1,h)=e_{1},&\\
\varphi_{11}(h,e_i)=-\varphi_{11}(e_i,h)=e_{i+1},&\\
\varphi_{11}(h,f_j)=-\varphi_{11}(f_j,h)=f_{j+1},&\\
2\leq i\leq n_1, \ 1\leq j\leq n_2-1,&\\[1mm]
\end{array}\right.\\[8mm]
\left\{
\begin{array}{lll}
\varphi^{12}_{j}(e_1,e_1)=e_j,& \\
\varphi^{12}_{j}(e_1,h)=-\varphi^{12}_{j}(h,e_1)=e_{j+1},&\\
\varphi^{12}_{j}(x_{1},h)=(j-2) e_j,&\\
\varphi^{12}_{j}(h,x_1)=-(j-4)e_j,&\\
\varphi^{12}_{j}(x_2,h)=-\varphi^{12}_{j}(h,x_2)=e_j,&\\
 2\leq j\leq n_1+1,&\\[1mm]
\end{array}\right.&
\left\{
\begin{array}{lll}
\varphi^{13}_{j}(e_1,e_1)=f_j,&\\
\varphi^{13}_{j}(e_1,h)=- \varphi^{13}_{j}(h,e_1)=f_{j+1},&\\
\varphi^{13}_{j}(x_{1},h)=(j-1) f_j,&\\
\varphi^{13}_{j}(h,x_{1})=(3-j) f_j,&\\
\varphi^{13}_{j}(x_3,h)=-\varphi^{13}_{j}(h,x_3)=f_j,&\\
1\leq j\leq n_2,&\\[1mm]
\end{array}\right.\\[5mm]
\left\{
\begin{array}{lll}
\varphi^{14}_{j}(e_1,e_j)=-\varphi^{14}_{j}(e_j,e_1)=h,&\\
\varphi^{14}_{j}(x_{1},e_{j+1})=(j-1) h,&\\
\varphi^{14}_{j}(e_{j+1},x_1)=(3-j)h,&\\
\varphi^{14}_{j}(x_2,e_{j+1})=-\varphi^{14}_{j}(e_{j+1},x_2)=h,&\\
3\leq j\leq n_1,&\\[1mm]
\end{array}\right.&\left\{
\begin{array}{lll}
\varphi^{15}_{j}(e_1,f_j)=-\varphi^{15}_{j}(f_j,e_1)=h,&\\
\varphi^{15}_{j}(x_{1},f_{j+1})=\varphi^{15}_{j}(f_{j+1},x_1)=j h,&\\
\varphi^{15}_{j}(x_3,f_{j+1})=-\varphi^{15}_{j}(f_{j+1},x_3)=h,&\\
 1\leq j\leq n_2-1,&\\[1mm]
\end{array}\right.
\end{array}$$
form a basis of spaces ${\rm ZL}^2(R,R)$ and ${\rm BL}^2(R,R).$
\end{prop}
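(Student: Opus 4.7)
The plan is to exploit the vector-space decomposition $R = \mathfrak{r}_c \oplus J$ set up just before equations (\ref{eq111})--(\ref{eq222}), and to treat the $\mathfrak{r}_c$-valued and $J$-valued components of a cocycle $\varphi \in {\rm ZL}^2(R,R)$ separately. First, I restrict (\ref{eq111}) to triples $(x,y,z) \in \mathfrak{r}_c^{\times 3}$ that avoid the patterns $\{e_1,e_1,a\}$, $\{e_1,a,e_1\}$, $\{a,e_1,e_1\}$ (for which the last three terms of (\ref{eq111}) vanish). The resulting identity is exactly the Leibniz $2$-cocycle condition for the map $\overline{\varphi}(x,y) := \varphi(x,y)_{\mathfrak{r}_c}$ on $\mathfrak{r}_c$; combining Remark \ref{rem1} with Theorem \ref{thmAncochea} gives ${\rm HL}^2(\mathfrak{r}_c,\mathfrak{r}_c) = {\rm H}^2(\mathfrak{r}_c,\mathfrak{r}_c) = 0$, so $\overline{\varphi}$ is accounted for by a basis of ${\rm ZL}^2(\mathfrak{r}_c,\mathfrak{r}_c) = {\rm BL}^2(\mathfrak{r}_c,\mathfrak{r}_c)$, which is the first summand mentioned in the proposition.

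Next, I would systematically apply the full cocycle identity (\ref{eq5}) to each triple of basis elements of $R$, processing (\ref{eq222}) as a linear system in the unknown scalars $\varphi(b_i,b_j)_J$ and $\varphi(\cdot,\cdot)$ on triples involving $h$. For triples wholly in $\mathfrak{r}_c$ the $J$-valued component produces, modulo coboundaries, precisely the representatives $\varphi_1,\dots,\varphi_7$, whose supports are the special pairs $(e_1,e_1)$, $(x_1,e_1)$, $(x_1,x_1)$, $(x_3,x_1)$, and the mixed $f_1$-$x_i$, $e_2$-$x_i$ pairs. For triples involving $h$, the bracket table in Theorem \ref{thmDescription} (in particular $[h,R] \subseteq J$ with only $[h,x_1]=2h$ nonzero) reduces (\ref{eq5}) to easy recursions determining $\varphi(h,\cdot)$ and $\varphi(\cdot,h)$, which yields the remaining $\varphi_8,\dots,\varphi_{11}$ and the families $\varphi^{12}_j,\varphi^{13}_j,\varphi^{14}_j,\varphi^{15}_j$.

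To finish, I must show that the listed cochains are simultaneously a basis of ${\rm BL}^2(R,R)$, i.e., each $\varphi_i$ is a coboundary of some $d_i \in {\rm Hom}(R,R)$ via (\ref{eq4}). The candidate maps are supported on one or two basis directions and can be read off by inspection; e.g.\ $\varphi_1 = d(d_1)$ with $d_1(e_1) = \tfrac{1}{2}h$, $\varphi_3 = d(d_3)$ with $d_3(x_1) = \tfrac{1}{2}h$, $\varphi_8 = d(d_8)$ with $d_8(h) = \tfrac{1}{2}x_3$, and the $\varphi^{12}_j,\varphi^{13}_j$ families come from $d(e_j) = \tfrac{1}{2} h$ (suitably normalized), while $\varphi^{14}_j,\varphi^{15}_j$ come from a map sending $e_{j+1}$ or $f_{j+1}$ to a scalar multiple of $h$. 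Once each $\varphi_i$ is so identified, linear independence of the entire list is immediate from inspection of supports and of the leading basis vectors in the image, so the same list is a basis for both ${\rm ZL}^2(R,R)$ and ${\rm BL}^2(R,R)$.

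The main obstacle is the coupling between the $\mathfrak{r}_c$- and $J$-components forced by the last three terms of (\ref{eq111}), which are non-zero precisely on the exceptional triples with two $e_1$'s, and by the last six terms of (\ref{eq222}); this coupling is exactly what generates the mixed cochains $\varphi_8$--$\varphi_{11}$ and the families $\varphi^{12}$--$\varphi^{15}$, and the bookkeeping to identify their correct coboundary-completions (without double-counting against the Lie-cocycle block for $\mathfrak{r}_c$) is the delicate and calculation-heavy part of the argument.
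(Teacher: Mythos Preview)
Your proposal is correct and follows essentially the same route as the paper: decompose $R=\mathfrak{r}_c\oplus J$, invoke Remark~\ref{rem1} and Theorem~\ref{thmAncochea} to identify the $\mathfrak{r}_c$-block with ${\rm ZL}^2(\mathfrak{r}_c,\mathfrak{r}_c)={\rm BL}^2(\mathfrak{r}_c,\mathfrak{r}_c)$, and then determine the complementary cochains by running the cocycle identity over the triples with at least one argument in $J$ together with the exceptional triples $\{e_1,e_1,a\},\{e_1,a,e_1\},\{a,e_1,e_1\}$ where the coupling terms in (\ref{eq111}) survive. The paper's proof is slightly terser about the coboundary step (it simply says ``applying the same arguments for $2$-coboundaries''), whereas you spell out candidate linear maps $d_i$ realizing each $\varphi_i$ via (\ref{eq4}); this is a helpful addition but not a different method.
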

\begin{proof}
The proof of this proposition is carried out by straightforward calculations of (\ref{eq4}) and (\ref{eq5}) by using result of Theorem \ref{thmAncochea}. In fact, due to Remark \ref{rem1} and centerlessness of the Lie algebra
$\mathfrak{r}_c$ we conclude that
${\rm H}^2(\mathfrak{r}_c,\mathfrak{r}_c)={\rm HL}^2(\mathfrak{r}_c,\mathfrak{r}_c)$, that is,
${\rm ZL}^2(\mathfrak{r}_c,\mathfrak{r}_c)={\rm BL}^2(\mathfrak{r}_c,\mathfrak{r}_c)$. Taking into account that ${\rm ZL}^2(\mathfrak{r}_c,\mathfrak{r}_c)$ is isomorphically embedded into ${\rm ZL}^2(R,R)$ (respectively, ${\rm BL}^2(\mathfrak{r}_c,\mathfrak{r}_c)$ is isomorphically embedded into ${\rm BL}^2(R,R)$) we need to find a basis of complementary subspaces to ${\rm ZL}^2(\mathfrak{r}_c,\mathfrak{r}_c)$ (respectively, to ${\rm BL}^2(\mathfrak{r}_c,\mathfrak{r}_c)$).

Further, we consider the equalities $(d^2\varphi)(x,y,z)=0$ for the following cases:
$$\begin{array}{llll}
x,y,z\in J, & x\in \mathfrak{r}_c, y,z\in J, & x, z\in J, y\in \mathfrak{r}_c, & x,y\in J, z\in \mathfrak{r}_c,\\
x,y\in \mathfrak{r}_c, z\in J, &  x,z\in \mathfrak{r}_c, y\in J & x\in J, y,z\in \mathfrak{r}_c,
\end{array}$$
from where we get the relations similar to the equations (\ref{eq111}) and (\ref{eq222}). In addition, calculations of (\ref{eq111}) for the triples
$\{e_1,e_{1},a\}, \ \{e_{1},a,e_1\}, \ \{a,e_{1},e_1\} \ \mbox{with} \ a\in \mathfrak{r}_c$
and (\ref{eq222}) for $x,y,z\in \mathfrak{r}_c$ give us some additional relations for complementary subspace to
${\rm ZL}^2(\mathfrak{r}_c,\mathfrak{r}_c)$.

Finally, combining all restrictions on $2$-cocycles and identifying the basis of complementary subspace to ${\rm ZL}^2(\mathfrak{r}_c,\mathfrak{r}_c)$ in ${\rm ZL}^2(R,R)$ we get the required basis of ${\rm ZL}^2(R,R)$.

Applying the same arguments for $2$-coboundaries we complete the proof of theorem.
\end{proof}

\begin{rem} In the above proposition we simplified the calculations using the results for the quotient Lie algebra $\mathfrak{r}_c$. In fact, we exclude calculation of equalities (\ref{eq222}) for the triples $x,y,z\in \mathfrak{r}_c$ except $\{e_1,e_{1},a\}, \ \{e_{1},a,e_1\}, \ \{a,e_{1},e_1\} \ \mbox{with} \ a\in \mathfrak{r}_c.$ Thus, instead of $(\dim  \mathfrak{r}_c)^3$ triples we calculated just $3\dim\mathfrak{r}_c$ triples in (\ref{eq222}).
\end{rem}

As a consequence of Proposition \ref{prop111} we get the following main result.

\begin{thm} The solvable Leibniz algebra $R$ is a cohomologically rigid algebra.
\end{thm}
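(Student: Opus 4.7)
The plan is to deduce the theorem directly from Proposition \ref{prop111} via a dimension-matching argument: once one knows that $\mathrm{ZL}^2(R,R)$ and $\mathrm{BL}^2(R,R)$ admit the \emph{same} explicit basis, equality of the two subspaces of $\mathrm{CL}^2(R,R)$ is automatic, and rigidity follows from the definition $\mathrm{HL}^2(R,R) = \mathrm{ZL}^2(R,R)/\mathrm{BL}^2(R,R)$.

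First I would record that $\mathfrak{r}_c$ is centerless (visible from its multiplication table, or alternatively from Theorem \ref{thmcomplete} applied to the quotient), so Remark \ref{rem1} yields $\mathrm{H}^2(\mathfrak{r}_c,\mathfrak{r}_c) = \mathrm{HL}^2(\mathfrak{r}_c,\mathfrak{r}_c)$, and Theorem \ref{thmAncochea} makes the left-hand side vanish. Hence $\mathrm{ZL}^2(\mathfrak{r}_c,\mathfrak{r}_c) = \mathrm{BL}^2(\mathfrak{r}_c,\mathfrak{r}_c)$, and any basis of $\mathrm{ZL}^2(\mathfrak{r}_c,\mathfrak{r}_c)$ lifts (by extending trivially on the ideal $J = \langle h \rangle$) to cochains of $R$ that lie simultaneously in $\mathrm{ZL}^2(R,R)$ and in $\mathrm{BL}^2(R,R)$, since the implementing derivation $d \in \mathrm{Hom}(\mathfrak{r}_c,\mathfrak{r}_c)$ extends to a derivation $\widetilde d \in \mathrm{Hom}(R,R)$ vanishing on $J$ and agreeing modulo $J$ via the decomposition $R = \mathfrak{r}_c \oplus J$ used in the paragraph preceding Proposition \ref{prop111}.

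Next I would invoke Proposition \ref{prop111} itself, which asserts that the union of such a lifted basis with the explicit list $\varphi_1,\dots,\varphi_{11},\varphi^{12}_j,\varphi^{13}_j,\varphi^{14}_j,\varphi^{15}_j$ is simultaneously a basis of $\mathrm{ZL}^2(R,R)$ and of $\mathrm{BL}^2(R,R)$. In particular the two subspaces have equal dimension, and since one always has $\mathrm{BL}^2(R,R) \subseteq \mathrm{ZL}^2(R,R)$, they must coincide. Consequently
\[
\mathrm{HL}^2(R,R) \;=\; \mathrm{ZL}^2(R,R)/\mathrm{BL}^2(R,R) \;=\; 0,
\]
and $R$ is cohomologically rigid by Definition 2.8.

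Strictly speaking no genuine obstacle remains at the level of this final theorem: all the labour has been absorbed into Proposition \ref{prop111}. The one point that deserves a careful sentence rather than a pure invocation is the verification that every cocycle $\varphi_i$, $\varphi^{12}_j$, $\varphi^{13}_j$, $\varphi^{14}_j$, $\varphi^{15}_j$ listed is indeed a coboundary, i.e.\ that one can exhibit an explicit $d \in \mathrm{Hom}(R,R)$ realising it through equation (\ref{eq4}); this is where the common-basis claim—and hence the whole theorem—actually has content. Given the shape of the cochains (each is supported on few basis pairs and takes values in the small subspace spanned by $h$ and the top generators $e_1,\,f_i,\,x_j$), producing the required $d$ is routine but must be done case by case, so I would present the proof as a one-line invocation of Proposition \ref{prop111}.
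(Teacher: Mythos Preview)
Your proposal is correct and matches the paper's own treatment: the theorem is stated there as an immediate consequence of Proposition~\ref{prop111}, and your dimension-matching argument (same basis for $\mathrm{ZL}^2(R,R)$ and $\mathrm{BL}^2(R,R)$, hence $\mathrm{HL}^2(R,R)=0$) is exactly the intended inference. One small terminological slip: the implementing $d\in\mathrm{Hom}(R,R)$ in equation~(\ref{eq4}) is an arbitrary linear map, not a derivation, so you should not call $\widetilde d$ a ``derivation'' in your lifting paragraph.
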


\section{General case}

In this section we present results similar to obtained in particular case for solvable Leibniz algebras with nilpotent radical $L(\alpha_i, \beta_i), \ 1\leq i\leq k$ and $(k+1)$-dimensional complementary subspace.

Taking into account that the general case is analogous to a special case we omit routine calculations using indexes $n_i$ and induction in the proofs of results below, we just give short sketch their proofs.

The sketch consists of the following steps:

\begin{enumerate}
	\item Firstly, we compute the space $\mathfrak{Der}(L(\alpha_i,\beta_i))$ with $1\leq i\leq k$. Further, we indicate $(k+1)$-pieces nil-independent derivations, which are depends on only non-zero parameters in the diagonal of the general matrix form of derivations.\\
	
	\item Secondly, we construct the solvable Leibniz algebra $\mathbf{R}=L(\alpha_i,\beta_i)\oplus Q$ with 
	$Q=\langle x_1, \ldots, x_{k+1} \rangle$ such that ${{\mathcal{R}_{x_s}}_|}_{L(\alpha_i,\beta_i)}=d_s,$ where $d_s, \ 1 \leq s \leq k+1$ are the nil-independent derivations indicated in the first step. Next, applying the Leibniz identity, the appropriate basis transformations and the mathematical induction we obtain the statement of Theorem \ref{thmDescriptionGeneral}.\\
	
	\item In order to prove the completeness of the solvable Leibniz algebra $\mathbf{R}$ (the first assertion of Theorem \ref{thmCompleteGeneral}) we just need to verify the table of multiplications of $\mathbf{R}$ obtained in the second step and using the fact that any derivation of the quotient Lie algebra $\mathfrak{r}_c=\mathbf{R}/\langle  h\rangle$ is inner together with arguments applied in the proof of the particular case (see Theorem \ref{thmcomplete}) allow us to prove the completeness of the algebra $\mathbf{R}$.\\

\item Finally, in the study of the second cohomology group of the algebra $\mathbf{R}$ we also use the triviality of the second group of cohomologies for the quotient algebra $\mathfrak{r}_c$, that is, we use the equality ${\rm Z}^2(\mathfrak{r}_c,\mathfrak{r}_c)={\rm B}^2(\mathfrak{r}_c,\mathfrak{r}_c)$. By arguments applied in before Proposition \ref{prop111} and due to Remark \ref{rem1} we conclude
$${\rm Z}^2(\mathfrak{r}_c,\mathfrak{r}_c)={\rm ZL}^2(\mathfrak{r}_c,\mathfrak{r}_c) \subseteq {\rm ZL}^2(\mathbf{R},\mathbf{R}), \quad {\rm B}^2(\mathfrak{r}_c,\mathfrak{r}_c)={\rm BL}^2(\mathfrak{r}_c,\mathfrak{r}_c)\subseteq {\rm BL}^2(\mathbf{R},\mathbf{R})$$
we only need to compute the dimensions of complementary  subspaces to ${\rm ZL}^2(\mathfrak{r}_c,\mathfrak{r}_c)$ (respectively, to ${\rm BL}^2(\mathfrak{r}_c,\mathfrak{r}_c)$) in ${\rm ZL}^2(\mathbf{R},\mathbf{R})$ (respectively, in ${\rm BL}^2(\mathbf{R},\mathbf{R})$). Thus, the proof of triviality of the second cohomology group for the algebra $\mathbf{R}$ with coefficient itself is completed by computations of dimensions of the mentioned complementary subspaces.
\end{enumerate}

\begin{thm} \label{thmDescriptionGeneral} Solvable Leibniz algebra with nilpotent radical $L(\alpha_i, \beta_i), \ 1\leq i\leq k$ and $(k+1)$-dimensional complementary subspace is isomorphic to the algebra:
$$\mathbf{R}: \left\{ \begin{array}{lll}
[e_{1},e_1]=h, \quad [h,x_1]=2h,&\\[1mm]
[e_i,e_1]=-[e_1,e_i]=e_{i+1}, &2 \leq i \leq n_1, &\\[1mm]
[e_{n_1+\ldots+n_{j}+i},e_1]=-[e_1,e_{n_1+\ldots+n_{j}+i}]=e_{n_1+\ldots+n_{j}+1+i},& 2\leq i\leq n_{j+1}, \\[1mm]
[e_1,x_1]=-[x_1,e_1]=e_1, &\\[1mm]
[e_i,x_1]=-[x_1,e_i]=(i-2)e_i,& 3\leq i \leq n_1+1,\\[1mm]
[e_{n_1+\ldots+n_{j}+i},x_1]=-[x_1, e_{n_1+\ldots+n_{j}+i}]=(i-2)e_{n_1+\ldots+n_{j}+i} & 2\le i\leq n_{j+1},\\[1mm]
[e_i,x_{2}]=-[x_{2}, e_i]=e_{i}, & 2\le i\leq n_1+1,\\[1mm]
[e_{n_1+\ldots+n_{j}+i},x_{j+2}]=-[x_{j+2}, e_{n_1+\ldots+n_{j}+i}]=
e_{n_1+\ldots+n_{j}+i}, & 2\le i\leq n_{j+1}.\\[1mm]
\end{array}\right.$$
where $1\leq j\leq k-1.$
\end{thm}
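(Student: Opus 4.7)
The plan is to follow the same four-step strategy employed in the special case (Theorem \ref{thmDescription}), upgrading each argument to arbitrary $k$ by parallel treatment of the $k$ Jordan blocks together with one induction on block length.

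First, I would extend Proposition \ref{propder} and Lemma \ref{lem2} to the family $L(\alpha_i,\beta_i)$. A direct computation, treating each block generated by $e_{n_1+\ldots+n_s+2}$ in the same way as the blocks generated by $e_1$ and $e_2$ in the two-block case, shows that a derivation $\mathbb{D}$ is determined by $k+2$ independent diagonal parameters (one for $e_1$ and one for each of the $k+1$ generators of the nilpotent radical) subject to linear relations encoded by the $\alpha_i,\beta_i$. The matrix $\mathbb{D}$ turns out to be nilpotent exactly when all these diagonal parameters vanish together with the products $\delta\theta$ coming from cross blocks. Thus one gets $k+2$ nil-independent derivations in principle, but the cross-block obstruction (analogous to the computation $f_2=[f_1,[x_2,x_4]]$ used in Proposition \ref{cor1}) applied to each pair of blocks reduces the maximal dimension of $Q$ to $k+1$, leaving the canonical choice $d_1,\ldots,d_{k+1}$ each supported on a single diagonal parameter.

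Second, setting $\mathbf{R}=L(\alpha_i,\beta_i)\oplus Q$ with $Q=\langle x_1,\ldots,x_{k+1}\rangle$ and $\mathcal{R}_{x_s}|_L=d_s$, I would verify $\operatorname{Ann}_r(\mathbf{R})=\langle h\rangle$ by the same device as in Theorem \ref{thmDescription}: for every non-top basis element of a Jordan block the identity $[x_s,e_t]=[x_s,[e_{t-1},e_1]]$ produces a non-zero expression, so $e_t\notin \operatorname{Ann}_r(\mathbf{R})$; the only surviving annihilator element is $h$. The relation $[x,y]+[y,x]\in \operatorname{Ann}_r(\mathbf{R})$ then determines the left brackets $[x_s,\cdot]$ up to multiples of $h$, and the quotient $\mathbf{R}/\langle h\rangle$ must coincide with the Lie algebra $\mathfrak{r}_c$ of Theorem \ref{thmAncochea}, since it shares the same nilpotent radical $\mathfrak{n}_c$ and the same action of $Q$ modulo the inner derivations.

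Third, lifting the structure back to $\mathbf{R}$, each bracket acquires an unknown scalar multiple of $h$. The Leibniz identity evaluated on the triples
\[
\{e_1,e_1,x_1\},\ \{e_2,e_2,x_1\},\ \{e_{n_1+\ldots+n_s+2},e_{n_1+\ldots+n_s+2},x_1\},\ \{e_1,e_2,x_1\},\ \{e_1,e_{n_1+\ldots+n_s+2},x_1\}
\]
forces $\alpha_i=\beta_i=0$ and $[h,x_1]=2h$, $[h,x_{s+2}]=0$. The remaining scalar coefficients in $[e_j,x_i]$, $[x_j,e_i]$ and $[x_i,x_j]$ are killed by the basis transformation generalizing the one in Theorem \ref{thmDescription}: $e_1'=e_1-a_1h$, $e_2'=e_2-\tfrac{b_1}{2}h$, $e'_{n_1+\ldots+n_s+2}=e_{n_1+\ldots+n_s+2}-\tfrac{c_s}{2}h$, $x_i'=x_i-\tfrac{\varphi_{i,1}}{2}h$. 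Further triples $\{\cdot,x_i,x_1\}$ and $\{x_i,\cdot,x_1\}$ then eliminate the cross-coefficients, and an induction on the intra-block index using $[x_s,e_{t+1}]=[x_s,[e_t,e_1]]=[[x_s,e_t],e_1]-[[x_s,e_1],e_t]$ (and the analogous formula inside each higher block) propagates the relations obtained on generators to the full basis, yielding the multiplication table in the theorem statement. The main obstacle is not conceptual but rather the indexing: one must be careful to track the case distinctions inherited from Lemma \ref{lem1} (depending on which of the $\alpha_i,\beta_j$ vanish) so that $k+1$ nil-independent diagonal directions are consistently identified across all degenerate subcases, whereupon the computations proceed as in the particular case with $k=2$ already handled in detail.
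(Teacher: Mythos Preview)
Your proposal is correct and follows essentially the same approach as the paper. The paper's own ``proof'' of Theorem \ref{thmDescriptionGeneral} is only a sketch pointing back to the particular case: it enumerates precisely the steps you outline---computing $\mathfrak{Der}(L(\alpha_i,\beta_i))$ and extracting $k+1$ nil-independent diagonal derivations, constructing $\mathbf{R}=L(\alpha_i,\beta_i)\oplus Q$ with $\mathcal{R}_{x_s}|_L=d_s$, then applying the Leibniz identity, the basis change, and induction on block index---and explicitly invokes the arguments of Theorem \ref{thmDescription} together with the quotient identification $\mathbf{R}/\langle h\rangle\cong\mathfrak{r}_c$; your write-up is in fact more detailed than the paper's sketch, in particular in making the $k+2\to k+1$ reduction via the cross-block obstruction (the analogue of Proposition \ref{cor1}) explicit.
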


\begin{thm} \label{thmCompleteGeneral} The solvable Leibniz algebra $\mathbf{R}$ is complete and its second group of cohomologies in coefficient itself is trivial.
\end{thm}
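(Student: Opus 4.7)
The plan is to handle the two assertions separately, mirroring the proofs in the particular case.

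\textbf{Completeness.} I would first observe that the multiplication table of $\mathbf{R}$ in Theorem \ref{thmDescriptionGeneral} immediately yields ${\rm Center}(\mathbf{R}) = 0$ and ${\rm Ann}_r(\mathbf{R}) = \langle h\rangle$ (the products $[e_i,e_1]=e_{i+1}$ and $[e_{n_1+\ldots+n_j+i},e_1]=e_{n_1+\ldots+n_j+1+i}$ push any would-be annihilator of $L$-type into $\langle h\rangle$, while $[h,x_1]=2h$ keeps $h$ out of the center). Since $\mathbf{R}/\langle h\rangle \cong \mathfrak{r}_c$, which is complete by Theorem \ref{thmAncochea}, any derivation $d \in \mathfrak{Der}(\mathbf{R})$ descends to an inner derivation of $\mathfrak{r}_c$. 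Then, exactly as in Theorem \ref{thmcomplete}, I would apply $d$ to the defining relations $e_{i+1} = [e_i,e_1]$, $e_{n_1+\ldots+n_j+1+i} = [e_{n_1+\ldots+n_j+i},e_1]$ and to the vanishing products $[x_j,x_1]=0$ to show that $d$ coincides modulo $\langle h\rangle$ with $\mathcal{R}_{\alpha e_1}$ for some $\alpha \in \mathbb{C}$. Finally, differentiating $[e_1,e_1]=h$ and comparing with $[h,x_1]=2h$ pins down $d(h) = 2\beta h$ for some $\beta$, so $d = \mathcal{R}_{\alpha e_1 + \beta x_1}$ modulo an inner derivation coming from $\mathfrak{r}_c$, hence is inner.

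\textbf{Triviality of ${\rm HL}^2(\mathbf{R},\mathbf{R})$.} Here I would follow the four-step sketch outlined before Theorem \ref{thmDescriptionGeneral}. With the vector-space splitting $\mathbf{R} = \mathfrak{r}_c \oplus J$ where $J = \langle h\rangle$, any $2$-cocycle $\varphi \in {\rm ZL}^2(\mathbf{R},\mathbf{R})$ decomposes componentwise, and (\ref{eq5}) splits into a relation taking values in $\mathfrak{r}_c$ (generalizing (\ref{eq111})) and a relation taking values in $J$ (generalizing (\ref{eq222})). Since ${\rm H}^2(\mathfrak{r}_c,\mathfrak{r}_c) = 0$ by Theorem \ref{thmAncochea}, and this equals ${\rm HL}^2(\mathfrak{r}_c,\mathfrak{r}_c)$ by Remark \ref{rem1}, the inclusions ${\rm ZL}^2(\mathfrak{r}_c,\mathfrak{r}_c) \subseteq {\rm ZL}^2(\mathbf{R},\mathbf{R})$ and ${\rm BL}^2(\mathfrak{r}_c,\mathfrak{r}_c) \subseteq {\rm BL}^2(\mathbf{R},\mathbf{R})$ reduce the problem to producing a basis of the complementary subspace of ${\rm ZL}^2(\mathfrak{r}_c,\mathfrak{r}_c)$ inside ${\rm ZL}^2(\mathbf{R},\mathbf{R})$ and showing it coincides with a basis of the complementary subspace of ${\rm BL}^2(\mathfrak{r}_c,\mathfrak{r}_c)$ inside ${\rm BL}^2(\mathbf{R},\mathbf{R})$.

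To assemble the extra cocycles, I would evaluate (\ref{eq111}) only on the triples $\{e_1,e_1,a\},\{e_1,a,e_1\},\{a,e_1,e_1\}$ with $a\in \mathfrak{r}_c$ and evaluate (\ref{eq222}) on triples with at least one argument in $J$, exactly the economy indicated in the remark after Proposition \ref{prop111}. The outcome is a family of generators indexed by the block structure of the characteristic sequence $(n_1,\ldots,n_k,1)$: each generator $x_{j+2}$ of $Q$ contributes analogues of $\varphi_5,\varphi_6,\varphi_7$, and each block of $\mathfrak{n}_c$ of length $n_{j+1}$ contributes families analogous to $\varphi^{13}_j$ and $\varphi^{15}_j$. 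For each such cocycle I would exhibit an explicit $1$-cochain whose coboundary via (\ref{eq4}) realizes it, placing every extra generator in ${\rm BL}^2(\mathbf{R},\mathbf{R})$.

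\textbf{Main obstacle.} The principal difficulty is the combinatorial bookkeeping: the number of extra cocycles grows linearly with $k$ and with the $n_j$, so one needs a systematic indexing and a proof by induction on $k$ with the particular case $k=2$ (Proposition \ref{prop111}) as base step. The basis transformations used there (shifting $e_1,e_2,f_1$ and $x_i$ by appropriate multiples of $h$) should generalize blockwise, and verifying that the dimensions of the two complementary subspaces match (so that every extra cocycle is accounted for as a coboundary) is the place where careful index control will be essential.
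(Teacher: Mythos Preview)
Your proposal is correct and follows essentially the same approach as the paper: the paper's own ``proof'' of Theorem~\ref{thmCompleteGeneral} is just the four-step sketch preceding Theorem~\ref{thmDescriptionGeneral}, which reduces both assertions to the particular case via the quotient $\mathbf{R}/\langle h\rangle\cong\mathfrak{r}_c$, invoking Theorem~\ref{thmAncochea} and Remark~\ref{rem1} exactly as you do. Your write-up is in fact more explicit than the paper's, particularly in noting that ${\rm Ann}_r(\mathbf{R})=\langle h\rangle$ (which justifies that derivations descend to the quotient) and in spelling out which triples need to be checked for the complementary cocycles.
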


From the results of the paper \cite{Bal} we obtain rigidity of the algebra $\mathbf{R}$.
\begin{cor} The solvable Leibniz algebra $\mathbf{R}$ is rigid.
\end{cor}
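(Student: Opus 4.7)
The plan is to invoke the general deformation-theoretic principle that drives the notion of cohomological rigidity: for a Leibniz algebra $L$, the space $\mathrm{HL}^2(L,L)$ is the tangent space to the moduli of infinitesimal deformations of $L$, and when this space vanishes every formal deformation is equivalent to the trivial one. In the Leibniz setting this is the content of Balavoine's work \cite{Bal}, which provides the implication
\[
\mathrm{HL}^2(L,L) = 0 \;\Longrightarrow\; L \text{ is (geometrically) rigid,}
\]
i.e.\ the $\mathrm{GL}$-orbit of $L$ in the variety of Leibniz algebra laws is Zariski open. This is the only nontrivial ingredient; once it is cited, the corollary is immediate.

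Concretely, I would proceed as follows. First, I invoke Theorem \ref{thmCompleteGeneral}, whose second assertion states that $\mathrm{HL}^2(\mathbf{R},\mathbf{R}) = 0$. Combined with the implication above from \cite{Bal}, this forces $\mathbf{R}$ to be rigid, and the proof is complete in a single line.

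There is essentially no technical obstacle at the level of this corollary; all the substantial work has already been done in establishing the triviality of $\mathrm{HL}^2(\mathbf{R},\mathbf{R})$ in Theorem \ref{thmCompleteGeneral} (via the reduction to $\mathfrak{r}_c$, the use of Remark \ref{rem1}, and the identification of the complementary subspaces to $\mathrm{ZL}^2(\mathfrak{r}_c,\mathfrak{r}_c)$ and $\mathrm{BL}^2(\mathfrak{r}_c,\mathfrak{r}_c)$ inside $\mathrm{ZL}^2(\mathbf{R},\mathbf{R})$ and $\mathrm{BL}^2(\mathbf{R},\mathbf{R})$). The only care needed is to ensure that the notion of rigidity used in \cite{Bal} matches the geometric one used in the introduction of the present paper (Zariski-open orbit under the base change action of $\mathrm{GL}$); this is indeed the case, so no further verification is required and the corollary follows directly.
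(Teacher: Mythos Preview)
Your proposal is correct and matches the paper's own approach: the paper simply states that rigidity follows from the results of \cite{Bal} combined with the vanishing of $\mathrm{HL}^2(\mathbf{R},\mathbf{R})$ established in Theorem \ref{thmCompleteGeneral}. Your write-up is more detailed, but the logical content is identical.
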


\begin{rem} Note that the structure of the rigid algebra $\mathbf{R}$ depends on the given decreasing sequence $(n_1, n_2, \ldots, n_k).$ Set $p(n)$ the number of such sequences, that is, $p(x)$ is
the number of integer solutions of the equation $n_1 + n_2 + \ldots + n_k = n$ with $n_1 \geq n_2 \geq \ldots \geq n_k \geq 0.$ The asymptotic value of $p(n)$, given in \cite{Hall} by the expression $p(n)\approx\frac{1}{4n\sqrt{3}} e^{\pi\sqrt{2n/3}},$ (where $a(n)\approx b(n)$ means that $\lim\limits_{n\rightarrow\infty}\frac{a(n)}{b(n)}=1$) get the existence of at least $p(n)$ irreducible components of the variety of Leibniz algebras of dimension $n+k+3$.

\end{rem}
\newpage


\begin{thebibliography}{99.}



\bibitem{Lisa}
Adashev J., Camacho L.,  Omirov B.,
{\it Central extensions of null-filiform and naturally graded filiform non-Lie Leibniz algebras},
Journal of Algebra,  479 (2017), 461--486.


\bibitem{Ancochea} Ancochea Berm\'{u}dez J. M., Campoamor-Stursberg R.,
{\it Cohomologically rigid solvable Lie algebras with a nilradical of arbitrary characteristic sequence},
Linear Algebra and its Applications, 488 (2016), 135--147.

\bibitem{Ayupov} Ayupov Sh. A., Omirov B. A.,
{\it  On Leibniz algebras},
Algebra and operator theory, Proceedings of the Colloquium in Tashkent 1997. Kluwer Academic Publishers, 1998, 1--12.

\bibitem{Bal} Balavoine D.,
{\it D\'{e}formations et rigidit\'{e} g\'{e}om\'{e}trique des alg\`{e}bres de Leibniz},
Communications in Algebra, 24 (1996), 3, 1017--1034.

\bibitem{Lindsey} Bosko-Dunbar L., Dunbar J. D., Hird J. T., Stagg K.,
{\it Solvable Leibniz Algebras with Heisenberg Nilradical},
Communications in Algebra, 43 (2015), 6, 2272--2281.

\bibitem{BoPaPo} Boyko V., Patera J., Popovych R.,
{\it Invariants of solvable {L}ie  algebras with triangular nilradicals and diagonal nilindependent elements},
Linear Algebra and its Applications, 428 (2008), 834--854.


\bibitem{Barnes} Barnes D. W.,
{\it On Levi's theorem for Leibniz algebras},
Bulletin of the Australian Mathematical Society, 86 (2012), 2,  184--185.



\bibitem{hei}
Calder\'{o}n A. J., Camacho L. M., Omirov B. A.,
{\it Leibniz algebras of Heisenberg type},
Journal of Algebra, 452 (2016), 427--447.


\bibitem{Cam} Campoamor-Stursberg R.,
{\it Solvable {L}ie algebras with an {$\mathbb{N}$}-graded nilradical of maximal nilpotency degree and their invariants},
Journal of Physics A, 43 (2010), 145202.


\bibitem{Nulfilrad} Casas J. M., Ladra M., Omirov B. A., Karimjanov I. A.,
{\it Classification of solvable Leibniz algebras with null-filiform nilradical},
Linear and Multilinear Algebra, 61 (2013), 6, 758--774.

\bibitem{qua}
Dherin B.,  Wagemann F.,
{\it Deformation quantization of Leibniz algebras},
Advances in Mathematics, 270 (2015), 21--48.


\bibitem{ed2}
Edalatzadeh B., Hosseini S.,
{\it Characterizing nilpotent Leibniz algebras by a new bound on their second homologies},
Journal of Algebra, 511 (2018), 486�498.


\bibitem{ed1}
Edalatzadeh B.,  Pourghobadian P.,
{\it Leibniz algebras with small derived ideal},
Journal of Algebra,  501 (2018), 215--224

\bibitem{Alice} Fialowski A., Magnin L., Mandal A.,
{\it About Leibniz cohomology and deformations of Lie algebras},
Journal of Algebra, 383 (2013),  63--77.

\bibitem{AlbAyupov1}
Gomez-Vidal S.,  Khudoyberdiyev A., Omirov B.,
{\it Some remarks on semisimple Leibniz algebras},
Journal of Algebra, 410 (2014), 526--540.

\bibitem{Gorbat} Gorbatsevich V. V.,
{\it On the Lieification of Leibniz algebras and its applications},
Russian Math. (Iz. VUZ), 60 (2016),  4, 10--16.

\bibitem{Hall} Hall M., {\it Combinatorial Theory}, John Wiley $\&$ Sons Inc, 1986.

\bibitem{geo}
Ismailov N., Kaygorodov I., Volkov Yu.,
{\it The geometric classification of Leibniz algebras},
International Journal of Mathematics, 29 (2018), 5, 1850035.

\bibitem{Iqbol} Karimjanov I. A., Khudoyberdiyev A. Kh., Omirov B. A.,
{\it Solvable Leibniz algebras with triangular nilradicals},
Linear Algebra and its Applications, 466 (2015), 530--546.



\bibitem{deg}
Kaygorodov I., Popov Yu., Pozhidaev A., Volkov Yu.,
{\it Degenerations of Zinbiel and nilpotent Leibniz algebras},
Linear and  Multilinear Algebra, 66 (2018), 4, 704--716.

\bibitem{Khal} Khalkulova Kh. A., Abdurasulov K. K.,
{\it Solvable Lie algebras with maximal dimension of complementary space to nilradical},
Uzbek Mathematical Journal, 2018, 1, 90--98.



\bibitem{hnn}
Ladra M., Shahryari M., Zargeh C.,
{\it HNN-extensions of Leibniz algebras},
Journal of Algebra, 532 (2019), 183--200.



\bibitem{Lod} Loday J.-L.,
{\it Une version non commutative des alg\`ebres de {L}ie: les alg\`ebres de {L}eibniz},
L'Enseignement Mathematique, 39 (1993), 2, 269--293.

\bibitem{Lod-Pir}  Loday J.-L.,  Pirashvili T.,
{\it Leibniz representations of Lie  algebras},
Journal of Algebra, 181 (1996), 2, 414--425.

\bibitem{Mub} Mubarakzjanov G. M.,
{\it  On solvable {L}ie algebras} (Russian),
Izv. Vys\v s. U\v cehn.  Zaved. Matematika, 32 (1963), 1, 114--123.



\bibitem{Snobl}  \u{S}nobl L., Winternitz P.,
{\it Classification and Identification of Lie Algebras},
CRM Monograph series, Centre de Recherches Math\'{e}matiques Montreal, 33, 2014.








%
%
%
%
%


\end{thebibliography}
\end{document}